\newtheorem{thm}     {Theorem}[section]
\newtheorem{prop}    [thm]{Proposition}
\newtheorem{cor}     [thm]{Corollary}
\newtheorem{lemma}   [thm]{Lemma}
\newcommand{\proof} {\noindent{\bf Proof. }}
\newcommand{\B}{\mathbb B}
\newcommand{\C}{\mathbb C}
\newcommand{\D}{\mathbb D}
\def\H{\mathbb H}
\def\P{\mathbb P}
\newcommand{\R}{\mathbb R}
\newcommand{\Z}{\mathbb Z}
\def\Re{{\rm Re\,}}
\def\Im{{\rm Im\,}}
\def\bar{\overline}
\def\Area{{\rm Area}}
\def\<{\langle}
\def\>{\rangle}
\def\Span{{\rm Span}}
\begin{document}

\title{Symplectic non-squeezing in Hilbert space\\
and discrete Schr\"odinger equations}
\author{Alexandre Sukhov{*} and Alexander Tumanov{**}}
\date{}
\maketitle

{\small
* Universit\'e des Sciences et Technologies de Lille, Laboratoire
Paul Painlev\'e, U.F.R. de Math\'e-matique, 59655 Villeneuve d'Ascq, Cedex, France. The author is partially supported by Labex CEMPI.
E-mail address: sukhov@math.univ-lille1.fr

** University of Illinois, Department of Mathematics, 1409 West Green Street, Urbana, IL 61801, USA.
The author is partially supported by Simons Foundation grant.
E-mail address: tumanov@illinois.edu
}
\bigskip

Abstract. We prove a generalization of Gromov's symplectic non-squeezing theorem for the case of Hilbert spaces. Our approach is based on filling almost complex Hilbert spaces by complex discs partially extending Gromov's results on existence of $J$-complex curves. We apply our result to the flow of the discrete nonlinear Schr\"odinger equation.
\bigskip

MSC: 32H02, 53C15.

Key words: symplectic diffeomorphism, Hilbert space, Hamiltonian PDE,  almost complex structure, $J$-complex disc, discrete nonlinear Schr\"odinger equation.

\tableofcontents

\section{Introduction}

In the space $\R^{2n}$ with coordinates $(x_1,...,x_n,y_1,...,y_n)$ and standard symplectic form $\omega = \sum_j dx_j \wedge dy_j$, we consider the Euclidean unit ball $\B$ and the cylinder $\Sigma = \{ (x,y): x_1^2 + y_1^2 < 1 \}$. Gromov's non-squeezing theorem \cite{Gr} states that if for some $r,R > 0$ there exists a symplectic  embedding $f: r\B \to R\Sigma$, that is,  $f^*\omega = \omega$, then $r \le R$.
This result had a deep impact on the development of the symplectic geometry. In contrast to the case of finite-dimensional symplectic manifolds arising from the classical mechanics and dynamics, the symplectic structures and flows corresponding Hamiltonian PDEs are defined on suitable Hilbert spaces, usually Sobolev spaces (see for instance \cite{Ku2}). This explains the interest in analogs of Gromov's theorem for symplectic Hilbert spaces. The first non-squeezing result for symplectic flows of  various classes of Hamiltonian PDEs was obtained by Kuksin  \cite{Ku1} and later extended in the work of Bourgain  \cite{Bou1,Bou2}, Colliander, Keel, Staffilani, Takaoka, and Tao \cite{Tao}, Roum\'egoux \cite {Rou}. Their approach is based on approximation of a symplectic flow on a Hilbert space by finite-dimensional symplectic flows which reduces the situation to Gromov's theorem. It seems natural to look for a general analog of Gromov's theorem for symplectic Hilbert spaces.
Abbondandolo and Majer  \cite{Ab} prove the result in the case where the symplectic image $f(r\B)$ of the Hilbert ball $r\B$ is convex. Finally, Fabert \cite{F} has recently proposed a proof of the result for general symplectic flows in Hilbert spaces using non-standard analysis.

In the present work we prove a generalization of Gromov's non-squeezing theorem to the case of symplectic Hilbert spaces under assumptions of boundedness and regularity of the symplectic transformation in certain Hilbert scales. Gromov's original proof uses almost complex structures $J$ tamed by the standard symplectic form on the complex projective space $\C\P^n$; the key technical tool is filling the projective space by $J$-complex spheres. An immediate attempt to extend this construction to the case of Hilbert spaces leads to difficulties because the main ingredients of Gromov's theory (compactness and transversality for $J$-complex curves) are not available. We use the method introduced in our previous paper \cite{SuTu1} where we give a new simple proof of Gromov's theorem.  This approach can be extended with suitable modifications to the Hilbert space case. The main idea is to replace $J$-complex spheres in Gromov's argument by $J$-complex discs with boundaries attached to the boundary of a cylinder. These discs are Hilbert space valued functions satisfying a certain first order quasilinear system of PDE with non-linear boundary conditions. The integral equation corresponding to this boundary value problem has a solution by combination of the contraction mapping principle and the Schauder fixed point theorem. Similar methods are known in the theory of the scalar Beltrami equation which partially inspired our approach; we extend them to vector valued functions. In the last section, we apply our main result to the flows of infinite systems of ODEs, in particular, discrete nonlinear Schr\"odinger equations.

The authors wish to thank Marius Junge and Zhong-Jin Ruan for their help with vector-valued $L^p$ spaces and Stephan de Bievre for useful discussions. We are grateful to the referee for suggesting to consider the discrete Schr\"odinger equation.

\section{Almost complex structures on Hilbert spaces}

In this section we introduce almost complex structures (see \cite{Aud}), spaces of vector-valued functions, and Hilbert scales (see \cite{Ku2}). We include some auxiliary results concerning almost complex structures in Hilbert spaces because we could not find precise references.

\subsection{Almost complex and  symplectic structures}
Let $V$ be a real vector space. If $V$ has finite dimension, then we assume that $\dim V$ is even.  A {\it linear almost complex structure} $J$ on $V$ is a bounded linear operator $J: V \to V$ satisfying $J^2 = -I$. Here and below  $I$ denotes the identity map or the identity matrix depending on the context.

Let $\H$ be a complex Hilbert space with Hermitian  scalar product $\langle \bullet, \bullet \rangle$; we consider only {\it separable} Hilbert spaces. Fix an orthonormal basis $\{ e_j \}_{j=1}^\infty$ of
$\H$ such that $Z = \sum_{j=1}^\infty Z_j e_j$ for every $Z \in \H$. Here $Z_j = x_j + iy_j= \langle Z,e_j \rangle$ are complex coordinates of $Z$. Then $\H$ can be identified with the complex space $l^2$. We will use complex conjugation $\bar{Z} = \sum_{j=1}^\infty \bar{Z}_j e_j$. The {\it standard almost complex structure} $J_{st}$ on $\H$ is the operator defined as $J_{st}Z=iZ$, hence, $J_{st}^2 = -I$. In the case where $\H$ has a finite dimension $n$ the structure $J_{st}$ is the usual complex structure on $\C^n$. We do not specify the dimension (finite or infinite) in this notation since it will be clear from the context. Denote by ${\mathcal L}(\H)$  the space  of real linear bounded operators on $\H$. An {\it almost complex structure} $J$ on $\H$ is a continuous map   $J: \H \to {\mathcal L}(\H)$,   $J: \H \ni Z \to J(Z)$ such that every $J(Z)$  satisfies $J^2(Z) = -I$. If the map $Z \mapsto J(Z)$ is independent of $Z$, then we can identify the tangent space of $\H$ at $Z$ with $\H$ and view $J$ as a linear almost complex structure on $\H$.

Denote by  $\D = \{ \zeta \in \C : |\zeta| < 1 \}$  the unit disc in $\C$. It is equipped with the standard complex structure $J_{st}$ of $\C$. Let $J$ be an almost complex structure on $\H$. A $C^1$- map
$f:\D \to \H$ is called  a $J$-{\it complex disc} in $\H$ if it satisfies {\it the Cauchy-Riemann equations}
\begin{eqnarray}
\label{CRglobal}
J \circ df= df \circ J _{st}.
\end{eqnarray}
We use  the complex derivatives
\[
f_Z = \frac{\partial f}{\partial Z} = \frac{1}{2} \left ( \frac{\partial f}{\partial x} - i\frac{\partial f}{\partial y} \right), \,\,\,\,\,\,  f_{\bar Z} = \frac{\partial f}{\partial \bar Z}=\frac{1}{2}\left ( \frac{\partial f}{\partial x} + i\frac{\partial f}{\partial y} \right ),
\]
where $f$ is a map between two (finite or infinite dimensional) Hilbert spaces. It is convenient to rewrite (\ref{CRglobal}) in complex notation.
Assume that  for all $Z \in \H$ the operator
$(J_{st} + J)(Z)$ is invertible. Then the linear operator
\[
L:= (J_{st} + J)^{-1}(J_{st} - J)
\]
is well defined. Like in the finite-dimensional case (see \cite{Aud}), the operator $L$ is $J_{st}$-anti-linear, i.e.,
$J_{st} L = -L J_{st}$. Hence, there exists a bounded
$J_{st}$-linear operator $A_J: \H \to \H$ such that
\[
L h = A_J \bar{h}.
\]
We call $A_J$ the {\it complex representation} of $J$ and often omit $J$.  With this convention the Cauchy-Riemann equations (\ref{CRglobal}) for a $J$-complex disc $Z:\D\to\H$, $Z: \D \ni \zeta \mapsto Z(\zeta)$ can be  written  in  the form
\begin{eqnarray}
\label{holomorphy}
Z_{\bar\zeta}=A_J(Z)\bar Z_{\bar\zeta},\quad
\zeta\in\D.
\end{eqnarray}

The {\it standard symplectic form } $\omega$ on $\H$ is  a nondegenerate antisymmetric bilinear form defined by
\[
\omega = \frac{i}{2}\sum_{j=1}^\infty dZ_j \wedge d\bar{Z}_j.
\]
We  use the natural identification of $\H$ with its tangent space at every point.

For a map $Z: \D \to \H$, $Z: \zeta \mapsto Z(\zeta)$ its (symplectic) {\it area}  is defined by
\begin{eqnarray}
\label{area}
\Area(Z) = \int_\D Z^*\omega
\end{eqnarray}
similarly to the finite-dimensional case. If $Z$ is $J_{st}$-holomorphic,  then (\ref{area}) represents its  area induced by the inner product of $\H$.

\subsection{Symplectomorphisms}

Let $\H$ be a complex Hilbert space with fixed basis and the standard
symplectic form $\omega$.
By default, all linear operators are bounded. For an $\R$-linear operator $F: \H \to \H$ we denote by $F^*$  its adjoint, that is $\Re \langle Fu,v \rangle = \Re \langle u, F^* v \rangle$. Put
\begin{eqnarray*}
\bar{F} Z = \bar{(F \bar Z)}, \,\,\, \mbox{and} \,\,\, F^{t} = \bar{ F^*}.
\end{eqnarray*}
Thus $F^t$ is the transpose of $F$.

Every $\R$-linear operator $F: \H \to \H$ can be uniquely written in the form
\[
F u = P u + Q \bar{u},
\]
where $P$ and $Q$ are $\C$-linear operators. For brevity we write
\[
F = \{ P, Q\}.
\]
Note that
\begin{eqnarray*}
F^* = \{ P^*, Q^t \}, \,\,\, F^t = \{ P^t, Q^* \}.
\end{eqnarray*}
The following two lemmas are proved in \cite{SuTu5}.

\begin{lemma}
\label{LemAC1}
Let $F= \{ P, Q \}$. Then $F$ preserves $\omega$, i.e.,
$\omega(Fu,Fv) = \omega(u,v)$ if and only if
\begin{eqnarray}
\label{identity1}
P^* P - Q^t \bar{Q} = I
\,\,\,\mbox{and}\,\,\,
P^t \bar{Q} - \bar{Q}^t P = 0.
\end{eqnarray}
\end{lemma}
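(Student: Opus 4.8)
The plan is to expand the condition $\omega(Fu, Fv) = \omega(u,v)$ using the decomposition $F = \{P, Q\}$ and the explicit formula for $\omega$, then separate the resulting identity into two parts according to the $\C$-bilinear versus $\C$-sesquilinear dependence on $u, v$. The starting point is the coordinate-free expression $\omega(u,v) = \Re\langle iu, v\rangle = -\Im\langle u, v\rangle$ (up to the normalization in the paper's definition of $\omega$); I would first record this, since it converts the symplectic condition into a statement about the Hermitian inner product and lets me use the adjoint relations $F^* = \{P^*, Q^t\}$ and $F^t = \{P^t, Q^*\}$ already set up in the excerpt.

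First I would substitute $Fu = Pu + Q\bar u$ and $Fv = Pv + Q\bar v$ into $\omega(Fu, Fv)$ and write everything in terms of Hermitian inner products, obtaining four groups of terms: those involving $\langle Pu, Pv\rangle$, $\langle Q\bar u, Q\bar v\rangle$, $\langle Pu, Q\bar v\rangle$, and $\langle Q\bar u, Pv\rangle$. Using $\omega = -\tfrac12\Im$ of the Hermitian product (matching the factor $\tfrac{i}{2}$ in the definition), each term can be rewritten by moving operators across the inner product via the adjoint. The key algebraic observation is that the terms $\langle Pu, Pv\rangle$ and $\langle Q\bar u, Q\bar v\rangle$ depend on $(u,v)$ in a $\C$-sesquilinear way (they produce $\langle (P^*P - Q^t\bar Q)\,u, v\rangle$-type expressions after accounting for the conjugation introduced by $Q\bar u$), while the cross terms $\langle Pu, Q\bar v\rangle$ and $\langle Q\bar u, Pv\rangle$ depend on $(u,v)$ in a $\C$-bilinear way.

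The main obstacle will be bookkeeping the conjugations correctly, especially tracking how $u \mapsto \bar u$ interacts with $\C$-linearity and how the real part / imaginary part behaves under each transfer across the inner product; here the identity $\langle Q\bar u, w\rangle = \overline{\langle \bar w, \bar Q^{\,*}\cdots\rangle}$ type manipulations and the definitions $\bar F Z = \overline{F\bar Z}$, $F^t = \bar{F^*}$ are exactly the tools to keep straight. After the substitution, I would collect the sesquilinear contribution and match it against $\omega(u,v) = -\tfrac12\Im\langle u,v\rangle$, which forces $P^*P - Q^t\bar Q = I$; the bilinear contribution must vanish identically in $u,v$, which forces the symmetry condition $P^t\bar Q - \bar Q^t P = 0$. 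Finally, the converse follows by reading the same computation backwards: if the two identities in (\ref{identity1}) hold, then substituting them back into the expanded form of $\omega(Fu,Fv)$ collapses it exactly to $\omega(u,v)$. Since the argument is purely algebraic and formal, it requires no finite-dimensionality and so transfers verbatim from the finite-dimensional proof, which is why the statement holds on arbitrary separable $\H$.
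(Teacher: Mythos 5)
Your plan is correct, and it is the standard direct computation one would expect; note that the paper itself gives no proof of this lemma but simply cites \cite{SuTu5}, so there is nothing in-text to diverge from. Two small points. First, with the paper's normalization one gets exactly $\omega(u,v)=\tfrac{i}{2}\bigl(\langle u,v\rangle-\overline{\langle u,v\rangle}\bigr)=-\Im\langle u,v\rangle$, not $-\tfrac12\Im\langle u,v\rangle$; this is harmless since $\omega(Fu,Fv)=\omega(u,v)$ is scale-invariant, but the constant should be stated correctly. Second, the one step you leave implicit --- that the sesquilinear part and the bilinear part of the identity can be equated separately --- deserves a sentence in the written proof: substituting $(u,v)\mapsto(iu,iv)$ fixes the sesquilinear terms and flips the sign of the bilinear cross terms, so each group must vanish on its own; the sesquilinear group then forces $P^*P-Q^t\bar Q=I$, and the symmetry of the bilinear form $(u,v)\mapsto\langle Q^*Pu,\bar v\rangle$ is precisely $P^t\bar Q=\bar Q^{\,t}P$ (using $\bar Q^{\,t}=Q^*$). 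With that made explicit, the argument is complete and, as you say, dimension-free.
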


A linear operator $F:\H \to \H$
is called a {\it linear symplectomorphism}
if $F$ is invertible and preserves $\omega$.

\begin{lemma}
\label{LemAC2}
Let $F = \{ P, Q \}$ be a linear symplectomorphism. Then $F^t$ also preserves $\omega$, that is,
\begin{eqnarray}
\label{identity2}
P P^* - Q Q^* = I\,\,\,\,\mbox{and}\,\,\,P Q^t - Q P^t = 0.
\end{eqnarray}
\end{lemma}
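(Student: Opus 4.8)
The plan is to reduce ``preserving $\omega$'' to a single operator identity involving $J_{st}$, and then exploit the algebraic relations $J_{st}^2=-I$ and $\bar{J_{st}}=-J_{st}$. First I would record that $\omega(u,v)=\Re\langle J_{st}u,v\rangle$, which follows directly from $\omega=\frac{i}{2}\sum_{j} dZ_j\wedge d\bar Z_j$ together with $\Re\langle iu,v\rangle=-\Im\langle u,v\rangle$. Since the real pairing $\Re\langle\,\bullet\,,\,\bullet\,\rangle$ is nondegenerate and determines an $\R$-linear operator uniquely, the condition $\omega(Fu,Fv)=\omega(u,v)$ for all $u,v$ is equivalent to
\[
F^*J_{st}F=J_{st}.
\]
Thus the hypothesis that $F=\{P,Q\}$ is an invertible $\omega$-preserving operator is encoded by this identity, and the goal becomes to establish the analogous identity $(F^t)^*J_{st}F^t=J_{st}$ for $F^t$.

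Next I would simplify $(F^t)^*$. Using $F^t=\bar{F^*}$ and the elementary fact that $(\bar A)^*=\bar{A^*}$ for every $\R$-linear $A$ (verified on the real pairing via $\langle \bar p,q\rangle=\overline{\langle p,\bar q\rangle}$), one gets $(F^t)^*=\overline{(F^*)^*}=\bar F$. Hence the target identity reads $\bar F\,J_{st}\,\bar{F^*}=J_{st}$.

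The crux is to pass from $F^*J_{st}F=J_{st}$ to $FJ_{st}F^*=J_{st}$; this is where invertibility enters, and it is the only non-formal input (which is exactly why invertibility is assumed). Right-multiplying $F^*J_{st}F=J_{st}$ by $F^{-1}$ and then by $J_{st}^{-1}=-J_{st}$ gives $F^*=-J_{st}F^{-1}J_{st}$, and substituting yields
\[
FJ_{st}F^*=FJ_{st}(-J_{st}F^{-1}J_{st})=-FJ_{st}^2F^{-1}J_{st}=FF^{-1}J_{st}=J_{st},
\]
where I used $J_{st}^2=-I$. In infinite dimensions this step is legitimate precisely because $F$ is invertible with bounded inverse, so $F^*$ and $(F^*)^{-1}$ are bounded as well.

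Finally I would conjugate $FJ_{st}F^*=J_{st}$. Using $\bar{AB}=\bar A\,\bar B$ and $\bar{J_{st}}=-J_{st}$ we obtain $\bar F\,(-J_{st})\,\bar{F^*}=-J_{st}$, that is $\bar F\,J_{st}\,\bar{F^*}=J_{st}$, which is exactly $(F^t)^*J_{st}F^t=J_{st}$. Hence $F^t$ preserves $\omega$. To land on the stated form (\ref{identity2}), I would apply Lemma \ref{LemAC1} to $F^t=\{P^t,Q^*\}$: computing $(P^t)^*=\bar P$, $(Q^*)^t=\bar Q$, $\overline{Q^*}=Q^t$ and $\overline{Q^*}^t=Q$, the two conditions (\ref{identity1}) for $F^t$ become $\overline{PP^*-QQ^*}=I$ and $PQ^t-QP^t=0$; conjugating the first gives $PP^*-QQ^*=I$, which is precisely (\ref{identity2}). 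I expect the main obstacle to be purely bookkeeping---correctly tracking how the bar, adjoint, and transpose operations interact and matching the outcome to the block form $\{P,Q\}$---rather than anything analytically deep, the single substantive ingredient being the invertibility used in the crux step.
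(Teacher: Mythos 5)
Your proof is correct; every step checks out. The identity $\omega(u,v)=\Re\langle J_{st}u,v\rangle$ is right (with $\langle u,v\rangle=\sum u_j\bar v_j$ one gets $\omega(u,v)=-\Im\langle u,v\rangle=\Re\langle iu,v\rangle$), the reduction of $\omega$-preservation to $F^*J_{st}F=J_{st}$ is legitimate because the real pairing is nondegenerate, the passage to $FJ_{st}F^*=J_{st}$ genuinely uses the assumed invertibility (which in infinite dimensions does not follow from $F^*J_{st}F=J_{st}$ alone, so you are right to flag it as the one substantive input), and the conjugation step together with the block bookkeeping for $F^t=\{P^t,Q^*\}$ lands exactly on \eqref{identity2}. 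One point of comparison: the paper itself gives no proof of this lemma --- it defers both Lemma \ref{LemAC1} and Lemma \ref{LemAC2} to the reference \cite{SuTu5} --- so there is no in-paper argument to match against. The route most consistent with the surrounding text (in particular with Proposition \ref{PropAC3}(a)) is the block computation: from \eqref{identity1} one checks directly that $G=\{P^*,-Q^t\}$ satisfies $GF=I$, invertibility then forces $FG=I$, and expanding $FG=I$ in the $\{\,\cdot\,,\,\cdot\,\}$ form yields precisely $PP^*-QQ^*=I$ and $PQ^t-QP^t=0$. Your version replaces that matrix algebra with the single conceptual identity $F^*J_{st}F=J_{st}$ and its conjugate/adjoint manipulations; it is slightly cleaner and makes the role of invertibility more transparent, at the cost of a final round of bookkeeping to translate back into the block form. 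Both arguments are elementary and both hinge on invertibility at the same point.
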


\begin{prop}
\label{PropAC3}
Let $F = \{ P, Q \}$ be a linear symplectomorphism. Then
\begin{itemize}
\item[(a)] $ F^{-1} = \{ P^*, -Q^t \}$;
\item[(b)] $P$ is invertible;
\item[(c)] $\| Q \bar{P}\,^{-1} \| = \| Q \| ( 1 + \| Q \| ^2)^{-1/2} < 1$.
\end{itemize}
\end{prop}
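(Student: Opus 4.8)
The plan is to handle the three parts in the order (a), (b), (c): part (a) uses only Lemma~\ref{LemAC2}, part (b) uses the quadratic identities of Lemmas~\ref{LemAC1} and~\ref{LemAC2}, and part (c) uses the invertibility established in (b). Throughout I would lean on the conjugation bookkeeping built into the notation, namely $\overline{AB} = \bar A\,\bar B$ for $\C$-linear operators, the relations $\overline{P^*} = P^t$ and $\overline{Q^t} = Q^*$ (which follow from $F^t = \overline{F^*}$ together with $F^* = \{P^*, Q^t\}$ and $F^t = \{P^t, Q^*\}$), the adjoint rule $(\bar P)^* = P^t$, and the inner-product rule $\langle \bar a, b\rangle = \overline{\langle a, \bar b\rangle}$.

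For (a), I would verify directly that $G := \{P^*, -Q^t\}$ is a right inverse of $F$ and then invoke the invertibility of $F$. Expanding $FG(u) = P(P^*u - Q^t\bar u) + Q\,\overline{(P^*u - Q^t\bar u)}$ and applying the conjugation rules to the second term (so that $\overline{P^*u} = P^t\bar u$ and $\overline{Q^t\bar u} = Q^* u$), the expression collapses to $(PP^* - QQ^*)u - (PQ^t - QP^t)\bar u$. Lemma~\ref{LemAC2} makes both coefficients vanish, giving $FG = I$; since $F$ is invertible by hypothesis, $G = F^{-1}$.

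For (b), the idea is to show that $P$ is simultaneously bounded below and surjective, using the two positivity relations. Lemma~\ref{LemAC1} gives $P^*P = I + Q^t\bar Q$, and a short conjugation computation shows $\langle Q^t\bar Q\, u, u\rangle = \|Q\bar u\|^2 \ge 0$: write $Q^t\bar Q = \overline{Q^*Q}$ and apply $\langle \bar a, b\rangle = \overline{\langle a, \bar b\rangle}$. Hence $P^*P \ge I$. Dually, Lemma~\ref{LemAC2} gives $PP^* = I + QQ^* \ge I$. Thus $P^*P$ and $PP^*$ are both invertible, which in a Hilbert space forces $P$ to be invertible, with $P^{-1} = (P^*P)^{-1}P^* = P^*(PP^*)^{-1}$. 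I expect this to be the one place where the infinite-dimensional setting genuinely matters: injectivity alone does not suffice, and one must combine the left inverse coming from $P^*P \ge I$ with the right inverse coming from $PP^* \ge I$, rather than invoking a rank argument.

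Finally, for (c) I would compute the norm of $T := Q\bar P^{\,-1}$ by the change of variable $u = \bar P v$, legitimate since $\bar P$ is invertible by (b). Then $Tu = Qv$, while $\|u\|^2 = \langle (\bar P)^*\bar P\, v, v\rangle$, and the same bookkeeping yields $(\bar P)^*\bar P = \overline{P^*P} = I + Q^*Q$, so $\|u\|^2 = \|v\|^2 + \|Qv\|^2$. Therefore $\|Tu\|^2/\|u\|^2 = \|Qv\|^2/(\|v\|^2 + \|Qv\|^2)$, and writing $t = \|Qv\|/\|v\|$ reduces the supremum over $v$ to maximizing the increasing scalar function $t^2/(1+t^2)$ over $t \in [0, \|Q\|]$. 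By continuity and monotonicity the supremum equals its value at $t = \|Q\|$, giving $\|T\|^2 = \|Q\|^2/(1 + \|Q\|^2)$ and hence $\|Q\bar P^{\,-1}\| = \|Q\|(1 + \|Q\|^2)^{-1/2} < 1$. The only subtlety is that the operator norm $\|Q\|$ need not be attained, which is harmless because $t \mapsto t^2/(1+t^2)$ is continuous and increasing; and the strict inequality is immediate since $\|Q\|^2 < 1 + \|Q\|^2$.
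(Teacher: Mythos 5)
Your proof is correct. Parts (a) and (b) follow the paper's own route: direct verification of the inverse from the identities \eqref{identity1}--\eqref{identity2}, and invertibility of $P$ from $P^*P\ge I$ and $PP^*\ge I$ (you usefully make explicit both the positivity computation $\langle Q^t\bar Q u,u\rangle=\|Q\bar u\|^2$ and the point that in infinite dimensions one genuinely needs both inequalities, not just injectivity). Part (c) is where you diverge: the paper computes $\|A\|=\|AA^*\|^{1/2}$ by first using the second identity in \eqref{identity2} to commute operators, obtaining $AA^*=(PP^*)^{-1}QQ^*=(I+QQ^*)^{-1}QQ^*$, and then applying the spectral mapping theorem to the increasing function $\lambda\mapsto\lambda(1+\lambda)^{-1}$ of the self-adjoint operator $QQ^*$. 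You instead substitute $u=\bar P v$, use $(\bar P)^*\bar P=I+Q^*Q$ to get $\|u\|^2=\|v\|^2+\|Qv\|^2$, and maximize the resulting Rayleigh-type quotient $t^2/(1+t^2)$ with $t=\|Qv\|/\|v\|$. Your version avoids both the commutation identity $Q\bar P^{\,-1}=\overline{P^t}\,^{-1}Q^t$ and the spectral mapping theorem, at the modest cost of the (correctly handled) remark that the supremum of a continuous increasing function over a set of values of $t$ with supremum $\|Q\|$ equals its value at $\|Q\|$ even when the operator norm is not attained. Both arguments yield the exact equality claimed, so the difference is one of technique rather than strength.
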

\begin{proof} For convenience, we include the proof from \cite{SuTu5}. Part (a) follows by (\ref{identity1}) and (\ref{identity2}). By (\ref{identity1}) and (\ref{identity2}), spectral values of the self-adjoint operators $P P^*$ and $P^* P$ are not smaller that $1$. Then both $P^* P$ and $PP^*$ are invertible which gives (b). For (c), put $A = Q \bar{P}\,^{-1}$. We estimate $\| A \| = \| A A^* \|^{1/2}$.
By (\ref{identity1}) and (\ref{identity2}) respectively, we have $Q \bar{P}\,^{-1} =\bar{P^t}\,^{-1}Q^t$ and $Q^t (P^t)^{-1} = P^{-1}Q$. Using the latter, $A A^* = (P P^*)^{-1} Q Q^*$. Since $P P^* = I + Q Q^*$ and $Q Q^*$ is self-adjoint, by the spectral mapping theorem
\[
\| A A^* \| = \frac{\| Q Q^* \|}{1 + \| Q Q^* \|} = \frac{\| Q \|^2}{1 + \| Q \|^2}
\]
because the function $\lambda \mapsto \lambda(1+ \lambda)^{-1}$  is increasing for $\lambda>0$.
\end{proof} $\blacksquare$

A $C^1$-diffeomorphism (continuously Fr\'echet differentiable map) $\Phi: \Omega_1 \to \Omega_2$ between two open subsets $\Omega_j$ in $(\H,\omega)$ is called a {\it symplectomorphism} if $\Phi^*\omega = \omega$. Here the star denotes the pull-back. In the proof of one of our main results (Theorem \ref{squiz}), we encounter an almost complex structure $J$ arising as the direct image
\[
J = \Phi_*(J_{st}):= d\Phi \circ J_{st} \circ d\Phi^{-1}
\]
of $J_{st}$ under a symplectomorphism $\Phi: (\H,\omega) \to (\H,\omega)$. We claim that such almost complex structure $J$ has a complex representation $A_J$, so the Cauchy-Riemann equations for $J$ can be written in the form (\ref{holomorphy}).

\begin{lemma}
\label{MatrixA}
Let $\Phi:\Omega_1 \to \Omega_2$ be a diffeomorphism of class $C^1$ between two open subsets $\Omega_j$, $j=1,2$ of $\H$. Put $P= \Phi_{Z}$ and $Q= \Phi_{\bar Z}$.
Then the complex representation $A_J$ of the direct image $J = \Phi_*(J_{st})$ has the form
\begin{eqnarray}
\label{MA}
A_J = Q\bar{P}\,^{-1}
\end{eqnarray}
\end{lemma}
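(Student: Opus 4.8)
The plan is to exploit the fact that the $J$-complex discs for $J=\Phi_*(J_{st})$ are exactly the $\Phi$-images of $J_{st}$-holomorphic discs, and then to read off $A_J$ from the Cauchy--Riemann equation (\ref{holomorphy}). Before doing so I would record that $A_J$ is only defined where $J_{st}+J$ is invertible, and observe that this is equivalent to invertibility of $P$. Since $J=d\Phi\circ J_{st}\circ d\Phi^{-1}$, one has $J_{st}+J=(J_{st}\,d\Phi+d\Phi\,J_{st})\,d\Phi^{-1}$; writing $d\Phi=\{P,Q\}$ and computing in this notation gives $J_{st}\,d\Phi=\{iP,iQ\}$ and $d\Phi\,J_{st}=\{iP,-iQ\}$, so $J_{st}\,d\Phi+d\Phi\,J_{st}$ is the $\C$-linear map $h\mapsto 2iPh$. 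As $d\Phi^{-1}$ is invertible, $J_{st}+J$ is invertible if and only if $P$ is, so invertibility of $P$ (hence of $\bar P$) is available.

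For the main computation I would take a $J_{st}$-holomorphic disc $w:\D\to\H$, so that $w_{\bar\zeta}=0$, and set $Z=\Phi\circ w$. Since $w$ is holomorphic, $dw\circ J_{st}=J_{st}\circ dw$, and therefore $J\circ dZ=d\Phi\circ J_{st}\circ d\Phi^{-1}\circ d\Phi\circ dw=d\Phi\circ dw\circ J_{st}=dZ\circ J_{st}$, i.e. $Z$ is a $J$-complex disc. Applying the chain rule with $\Phi_Z=P$, $\Phi_{\bar Z}=Q$ (evaluated along $w$) and the rules $(\bar w)_\zeta=\overline{w_{\bar\zeta}}$, $(\bar w)_{\bar\zeta}=\overline{w_\zeta}$, I obtain $Z_\zeta=Pw_\zeta$ and $Z_{\bar\zeta}=Q\overline{w_\zeta}$. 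Inverting the first relation and using $\overline{Z_\zeta}=\bar Z_{\bar\zeta}$ gives $\overline{w_\zeta}=\bar P^{-1}\bar Z_{\bar\zeta}$, whence $Z_{\bar\zeta}=Q\bar P^{-1}\bar Z_{\bar\zeta}$. Comparing with (\ref{holomorphy}), and noting that as $w$ ranges over all holomorphic discs the vector $\bar Z_{\bar\zeta}=\overline{Pw_\zeta}$ exhausts $\H$ because $P$ is onto, the identity forces $A_J=Q\bar P^{-1}$.

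The point requiring care is the last comparison---that a relation holding along discs genuinely pins down the operator $A_J$ at each point---together with the fact that a general diffeomorphism $\Phi$ offers no closed formula for $d\Phi^{-1}$. To make the conclusion airtight I would instead verify the defining relation for $L=(J_{st}+J)^{-1}(J_{st}-J)$ directly. From $J_{st}\pm J=(J_{st}\,d\Phi\pm d\Phi\,J_{st})\,d\Phi^{-1}$, the computation above and its companion $J_{st}\,d\Phi-d\Phi\,J_{st}:h\mapsto 2iQ\bar h$ give $L=d\Phi\circ N\circ d\Phi^{-1}$ with $N(h)=P^{-1}Q\bar h$. Setting $L_0(h)=Q\bar P^{-1}\bar h$, it then suffices to check the operator identity $d\Phi\circ N=L_0\circ d\Phi$: indeed $d\Phi\,N(h)=P(P^{-1}Q\bar h)+Q\overline{P^{-1}Q\bar h}=Q\bar h+Q\bar P^{-1}\bar Qh$ and $L_0\,d\Phi(h)=Q\bar P^{-1}(\bar P\bar h+\bar Qh)=Q\bar h+Q\bar P^{-1}\bar Qh$ agree. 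Since $d\Phi$ is invertible this yields $L=L_0$, i.e. $Lh=Q\bar P^{-1}\bar h$, which by the definition of the complex representation is precisely $A_J=Q\bar P^{-1}$; this route confirms the formula without ever computing $d\Phi^{-1}$ explicitly.
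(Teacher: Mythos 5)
Your argument is correct, and every computation checks out: the identities $J_{st}\,d\Phi+d\Phi\,J_{st}=\{2iP,0\}$ and $J_{st}\,d\Phi-d\Phi\,J_{st}=\{0,2iQ\}$, the conjugation rule $\overline{P^{-1}Q\bar h}=\bar P^{-1}\bar Q h$, and the final verification $d\Phi\circ N=L_0\circ d\Phi$ are all right. The paper itself gives no computation here: it merely observes that $P$ is invertible (via Proposition~\ref{PropAC3}, which presumes $d\Phi$ is a linear symplectomorphism) and then cites Lemma~2.3 of \cite{SuTu2}, asserting that its proof carries over to Hilbert space. So your write-up is not so much a different route as a self-contained substitute for the outsourced reference, and it has two virtues the paper's one-line proof lacks. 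First, you derive the invertibility of $J_{st}+J$ as \emph{equivalent} to that of $P$, which both justifies that $A_J$ is defined and avoids the paper's implicit appeal to the symplectic hypothesis (not actually stated in the lemma). Second, you correctly identify that the pleasant heuristic --- reading $A_J$ off the discs $Z=\Phi\circ w$ --- only determines $A_J$ on the range of the vectors $\bar Z_{\bar\zeta}$, and you replace it by the direct operator identity $L=d\Phi\circ N\circ d\Phi^{-1}=L_0$, which is the genuinely airtight step. The only cosmetic remark is that the first, disc-based computation is then logically redundant (though it motivates the formula); the operator verification alone suffices.
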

Indeed, by Proposition \ref{PropAC3} the operator $P$ is invertible for all $Z$.
Then the conclusion follows by Lemma 2.3 from \cite{SuTu2}, whose proof  goes through for the Hilbert space case without changes.

\subsection{Hilbert scales}

Let $\H$ be a complex Hilbert space with fixed basis.
Let $(\theta_n)_{n=1}^\infty$ be a sequence of positive numbers
such that $\theta_n \to \infty$ as $n \to \infty$,
for example, $\theta_n=n$.
Introduce a diagonal operator
\[
D={{\rm Diag}}(\theta_1, \theta_2, \ldots).
\]
For $s\in\R$ we define $\H_s$ as a Hilbert space with the following
inner product and norm:
\[
\<x,y\>_s=\<D^s x,D^s y\>, \quad
\|x\|_s=\|D^s x\|.
\]
Thus $\H_0=\H$,
$\H_s=\{x\in \H: \|x\|_s<\infty \}$ for $s>0$, and
$\H_s$ is the completion of $\H$ in the above norm for $s<0$.
The family $(\H_s)$ is called a Hilbert scale corresponding
to the sequence $(\theta_n)$. For $s > r$, the space $\H_s$
is dense in $\H_r$, and the inclusion $\H_s \subset \H_r$ is compact.
We refer to \cite{Ku2} for details.

We need a version of Proposition \ref{PropAC3} for Hilbert scales.
\begin{prop}
\label{PropHS}
Let $(\H_s)$ be a Hilbert scale.
Let $F = \{ P, Q \}$ be a linear symplectomorphism of the standard
symplectic structure on $\H=\H_0$.
Let $s_0, C>0$ be constants such that
$\|F\|_s\le C$ and $\|F^{-1}\|_s\le C$ for $0\le s\le s_0$.
Then there exist constants $s_1>0$ and $0<a<1$ depending only
on $s_0$ and $C$ such that for $0\le s\le s_1$
\begin{itemize}
\item[(a)] $ \|P^{-1}\|_s \le 2C$;
\item[(b)] $\| Q \bar P\,^{-1} \|_s \le a$.
\end{itemize}
\end{prop}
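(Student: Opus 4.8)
The plan is to transfer everything to the base space $\H_0$ via the substitution $T_s:=D^sTD^{-s}$, under which $\|T\|_s=\|T_s\|_0$, $(T^{-1})_s=(T_s)^{-1}$, $(\bar T)_s=\overline{T_s}$, and $(Q\bar P^{-1})_s=Q_s\,\overline{P_s}^{\,-1}$; since $D^{it}$ is unitary, each of these base quantities depends only on the real part of the (complexified) exponent. First I would record the scalar consequences of the hypothesis. Extraction of the $J_{st}$-linear and -antilinear parts of an operator is norm nonincreasing, so $\|F\|_s\le C$ and $\|F^{-1}\|_s\le C$ give $\|P\|_s,\|Q\|_s\le C$ and, since $F^{-1}=\{P^*,-Q^t\}$, also $\|P^*\|_s,\|Q^t\|_s\le C$ for $0\le s\le s_0$. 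Because $\|T^*\|_s=\|T\|_{-s}=\|T^t\|_s$, the latter bounds are equivalent to bounds on $P,Q$ for $-s_0\le s\le 0$, so in fact $\|P\|_s,\|Q\|_s,\|P^*\|_s,\|Q^t\|_s\le C$ for all $|s|\le s_0$. This two-sided control, which uses \emph{both} $F$ and $F^{-1}$, is what makes the argument work.

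Next I would reduce (b)$\Rightarrow$(a). Using Lemmas \ref{LemAC1}--\ref{LemAC2} one checks $P^*A=Q^t$ for $A:=Q\bar P^{-1}=A_J$, which gives the factorization $F=\{I,A\}\circ P$, hence $P^{-1}=F^{-1}\circ\{I,A\}$. Since $\|\{I,A\}u\|_s\le(1+\|A\|_s)\|u\|_s$ (using $\|\bar u\|_s=\|u\|_s$), we get $\|P^{-1}\|_s\le\|F^{-1}\|_s(1+\|A\|_s)\le C(1+\|A\|_s)$. Thus once (b) is known with bound $a<1$ we obtain $\|P^{-1}\|_s\le C(1+a)<2C$, which is (a).

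The core is an a priori estimate I will call the \emph{continuity principle}: if $\|T\|_s\le B$ for all $|s|\le\sigma_*$, then $\|T_s-T_0\|_0\to0$ as $s\to0$ with a modulus depending only on $\sigma_*,B$. The mechanism is that two-sided boundedness forces off-diagonal decay of the matrix of $T$ in the scale, $|\langle Te_j,e_i\rangle|\le B\,e^{-\sigma_*|\log(\theta_i/\theta_j)|}$, while $T_s-T_0$ is obtained from $T$ by the spectral multiplier $e^{s\tau}-1$ in the variable $\tau=\log\theta_i-\log\theta_j$; combining the decay with this multiplier yields a bounded multiplier of small total variation, and the transference inequality $\|\int_\R D^{i\xi}S\,D^{-i\xi}\,d\mu(\xi)\|_0\le\|\mu\|\,\|S\|_0$ (valid because $D^{i\xi}$ is unitary) bounds $\|T_s-T_0\|_0$ by a quantity tending to $0$. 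This multiplier estimate for the difference is the step I expect to be the main obstacle; it is precisely where two-sidedness is indispensable, and it is a vector-valued analogue of the singular-integral (Beltrami-type) estimates alluded to in the introduction. I note that a naive three-lines argument fails here, because on the imaginary axis the difference is only bounded by $2B$, not small.

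With the continuity principle in hand the proof finishes quickly. Applying it to $T=Q^t\bar Q$ (two-sided bounded by $C^2$) and using $I+Q^t\bar Q=P^*P\ge I$ at $s=0$, the operator $I+(Q^t\bar Q)_s$ is invertible with $\|(I+(Q^t\bar Q)_s)^{-1}\|_0\le 2$ for $|s|\le\sigma_0$, where $\sigma_0=\sigma_0(C,s_0)$ is chosen so that the modulus is $\le\tfrac12$. The same applied to $QQ^*$ (with $PP^*=I+QQ^*$), together with the identities $P^*P=I+Q^t\bar Q$ and $PP^*=I+QQ^*$ read on $\H_s$, shows that $P$ is bounded below and has dense range on $\H_s$, hence is invertible there with $\|P^{-1}\|_s\le 2C$; in particular $A=Q\bar P^{-1}$ is bounded on $\H_s$ for $|s|\le\sigma_0$. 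Finally I apply the continuity principle to $A$ itself: being two-sided bounded on $[-\sigma_0,\sigma_0]$, it satisfies $\|A\|_s\to\|A\|_0$, and by Proposition \ref{PropAC3}(c) with $\|Q\|_0\le C$ we have $\|A\|_0=\|Q\|_0(1+\|Q\|_0^2)^{-1/2}\le C(1+C^2)^{-1/2}<1$. Hence with $a:=\tfrac12\big(1+C(1+C^2)^{-1/2}\big)<1$ there is $s_1\in(0,\sigma_0]$, depending only on $C,s_0$, such that $\|A\|_s\le a$ for $0\le s\le s_1$, which is (b); and then (a) follows from the factorization above.
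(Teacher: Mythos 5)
Your overall architecture is the same as the paper's: extract two-sided bounds $\|P\|_s,\|Q\|_s,\|P^*\|_s,\|Q^t\|_s\le C$ for $|s|\le s_0$ from the hypotheses on $F$ and $F^{-1}$, invert $I+QQ^*$ (or $I+Q^t\bar Q$) on $\H_s$ by treating it as a small perturbation of the $s=0$ situation where positivity is available, deduce (a), and then get (b) from the $s=0$ bound $\|A\|_0\le C(1+C^2)^{-1/2}<1$ of Proposition \ref{PropAC3}(c) by a limiting argument in $s$. (The paper finishes (b) by interpolation between $s=0$ and $s=s_1$ rather than by a second application of the continuity statement; either works.) The problem is that the entire weight of the proof rests on what you call the continuity principle, $\|D^sTD^{-s}-T\|_0\to 0$ as $s\to 0$ under two-sided bounds, and you do not prove it: you sketch an off-diagonal-decay-plus-transference argument and explicitly flag the multiplier estimate as ``the step I expect to be the main obstacle.'' That step is indeed nontrivial as you set it up --- $e^{s\tau}-1$ is not the Fourier transform of a finite measure, so you would have to absorb the growth into the entry decay (e.g.\ divide by $\cosh(\sigma_*\tau)$, identify the remaining operator as $\tfrac12(T_{\sigma_*}+T_{-\sigma_*})$, and show the resulting multiplier is $\hat\mu$ for a measure with $\|\mu\|\to 0$), none of which is carried out. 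As written, the key lemma is missing.

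Moreover, your reason for rejecting the complex-analytic route is mistaken, and this is exactly where the paper is much simpler (Lemma \ref{lemmaHS}). The function $f(s)=D^sTD^{-s}-T$ is holomorphic in the complex variable $s$; since $D^{it}$ is unitary, $\|D^sTD^{-s}\|_0=\|T\|_{\Re s}\le B$, so $\|f(s)\|_0\le 2B$ on the disc $|s|\le\sigma_*$, and $f(0)=0$. The paper then applies the \emph{Schwarz lemma}, not the three-lines theorem: it needs only boundedness on the disc and vanishing at the origin, not smallness on the imaginary axis, and it yields the Lipschitz bound $\|f(s)\|_0\le 2B\sigma_*^{-1}|s|$ --- stronger than the qualitative modulus of continuity you aim for. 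So the obstacle you built your proof around does not exist; replacing your unproven transference lemma with this three-line Schwarz-lemma argument would close the gap and essentially reproduce the paper's proof. The remaining pieces of your write-up (the identity $F=\{I,A\}\circ P$ giving $\|P^{-1}\|_s\le C(1+\|A\|_s)$, and the invertibility of $P$ on $\H_s$ from left and right approximate inverses) are correct, though you should note that the two-sided bound on $A$ needed before applying any continuity statement to $A$ itself requires the estimate on $\|P^{-1}\|_s$ for negative $s$ as well, which your perturbation argument does supply.
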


\begin{lemma}
\label{lemmaHS}
Let $Q$ be a linear operator in $\H_0$.
Suppose $\|Q\|_s\le C$ for real $|s|\le s_0$.
Then $\|D^s Q D^{-s}-Q\|_0\le 2Cs_0^{-1}|s|$ for $|s|\le s_0$.
\end{lemma}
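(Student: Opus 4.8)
The plan is to recast the hypothesis as a uniform bound on a conjugated operator and then extract the linear estimate by a Hadamard three--lines argument applied to matrix coefficients. First I would note that the operator norm on $\H_s$ can be transported to $\H_0=\H$: substituting $y=D^sx$ in the definition of $\|\cdot\|_s$ turns $\sup_{x}\|Qx\|_s/\|x\|_s$ into $\sup_{y}\|D^sQD^{-s}y\|_0/\|y\|_0$, so that $\|Q\|_s=\|D^sQD^{-s}\|_0$. Thus the hypothesis reads $\|D^sQD^{-s}\|_0\le C$ for real $|s|\le s_0$, and the desired conclusion is $\|D^sQD^{-s}-Q\|_0\le 2Cs_0^{-1}|s|$.

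Next I would extend the parameter to the complex strip $\Pi=\{s\in\C:\ |\Re s|\le s_0\}$ and set $F(s)=D^sQD^{-s}$. The crucial observation is that $D^{it}$ is unitary, since $\theta_j>0$ gives $|\theta_j^{it}|=1$, and that $D^{it}$ commutes with $D^\sigma$; hence for $s=\sigma+it$ one has $F(\sigma+it)=D^{it}F(\sigma)D^{-it}$, so that $\|F(s)\|_0=\|F(\Re s)\|_0\le C$ everywhere on $\Pi$. In other words, the bound along the real axis propagates to the whole strip, which is exactly what makes a three--lines estimate available.

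To avoid discussing operator--valued holomorphy, I would pass to matrix coefficients. For $u,v\in\H$ set $f_{u,v}(s)=\langle F(s)u,v\rangle$; since $\langle F(s)e_n,e_m\rangle=Q_{mn}(\theta_m/\theta_n)^s$ is entire, $f_{u,v}$ is holomorphic on $\Pi$ whenever $u,v$ are finite linear combinations of the $e_j$, and $|f_{u,v}(s)|\le\|F(s)\|_0\|u\|\,\|v\|\le C\|u\|\,\|v\|$ throughout $\Pi$. I would then form $g_{u,v}(s)=(f_{u,v}(s)-f_{u,v}(0))/s$, which has a removable singularity at the origin and is bounded on $\Pi$ (it tends to $0$ as $|\Im s|\to\infty$). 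On the two boundary lines $\Re s=\pm s_0$ one has $|s|\ge s_0$ and $|f_{u,v}(s)-f_{u,v}(0)|\le 2C\|u\|\,\|v\|$, hence $|g_{u,v}(s)|\le 2Cs_0^{-1}\|u\|\,\|v\|$ there.

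Finally I would invoke the maximum principle for a strip (Phragm\'en--Lindel\"of): a bounded holomorphic function on $\Pi$ cannot exceed on the interior the supremum of its modulus on the two boundary lines. This gives $|g_{u,v}(s)|\le 2Cs_0^{-1}\|u\|\,\|v\|$ for all $s\in\Pi$; specializing to real $s$ yields $|\langle(F(s)-F(0))u,v\rangle|=|s|\,|g_{u,v}(s)|\le 2Cs_0^{-1}|s|\,\|u\|\,\|v\|$, and taking the supremum over unit $u,v$ in the dense set of finite combinations of the $e_j$ gives $\|D^sQD^{-s}-Q\|_0\le 2Cs_0^{-1}|s|$. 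The step I would watch most carefully is the propagation of the bound from the real axis to all of $\Pi$ via the unitarity of $D^{it}$, together with the verification that $g_{u,v}$ is bounded and holomorphic so that the maximum principle genuinely applies; the rest is routine.
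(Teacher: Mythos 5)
Your proof is correct and follows essentially the same route as the paper: both reduce to $\|Q\|_s=\|D^sQD^{-s}\|_0$, use the unitarity of $D^{it}$ to propagate the bound $2C$ over the complex strip $|\Re s|\le s_0$, and then divide out the zero of $D^sQD^{-s}-Q$ at $s=0$ via a maximum-principle argument. The paper simply applies the Schwarz lemma on the disc $|s|\le s_0$ directly to the operator-valued holomorphic function, whereas you scalarize through matrix coefficients and use Phragm\'en--Lindel\"of on the whole strip --- a slightly more careful but equivalent execution of the same idea, with the same constant $2Cs_0^{-1}$.
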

\begin{proof}
One easily verifies $\|Q\|_s=\|D^s Q D^{-s}\|_0$.
Introduce $f(s)=D^s Q D^{-s}-Q$ as a holomorphic
function of complex variable $s$.
Since for $t\in \R$ the operator $D^{it}$ is unitary,
we have $\|f(s)\|_0\le 2C$ in the strip $|\Re s|\le s_0$,
in particular, in the disc $|s|\le s_0$.
Since $f(0)=0$, by the Schwarz lemma we get the
desired estimate.
\end{proof} $\blacksquare$
\medskip

\noindent
{\bf Proof of Proposition \ref{PropHS}.}
Since $Pz=\frac{1}{2}(Fz-iF(iz))$ and
$Qz=\frac{1}{2}(F\bar z+iF(i\bar z))$, we have
$\|P\|_s\le C$ and $\|Q\|_s\le C$.
By Proposition \ref{PropAC3} (a) we also have
$\|P^*\|_s\le C$ and $\|Q^*\|_s\le C$, here the stars
stand for the adjoints in $\H_0$.
By \eqref{identity2}, $P^{-1}=P^*(I+QQ^*)^{-1}$,
which we will use to estimate $P^{-1}$.

Since $\|Q\|_s\le C$ for $0\le s\le s_0$,
we have $\|D^s Q D^{-s}\|_0\le C$ for such $s$.
Passing to the adjoint, $\|D^{-s} Q^* D^s\|_0\le C$ for
$0\le s\le s_0$.
Since $\|Q^*\|_s\le C$ also, we have
$\|D^{s} Q^* D^{-s}\|_0\le C$ for $0\le s\le s_0$.
Altogether,
$\|D^s Q^* D^{-s}\|_0\le C$ for all $|s|\le s_0$.
By Lemma \ref{lemmaHS},
$\|D^s Q^* D^{-s}-Q^*\|_0\le 2Cs_0^{-1}|s|$ for $|s|\le s_0$.

Denote by $Q^*_s$ the adjoint of $Q$ in $\H_s$.
One easily verifies $Q^*_s=D^{-2s} Q^* D^{2s}$.
We rewrite $(I+QQ^*)^{-1}=(I+K+L)^{-1}$, here
$K=QQ^*_s\ge 0$ in $\H_s$ and $L=Q(Q^*-Q^*_s)$.
We claim that $L$ is small for small $s$. Indeed,
\[
\|Q^*-Q^*_s\|_s=\|D^s(Q^*-D^{-2s} Q^* D^{2s})D^{-s}\|_0
\le \|D^s Q^* D^{-s}-Q^*\|_0
+\|D^{-s} Q^* D^s-Q^*\|_0
\le 4Cs_0^{-1}s
\]
for $0\le s\le s_0$.
Then $\|L\|_s\le 4C^2 s_0^{-1}s$ for such $s$.
Put $s_1=\frac{1}{8}C^{-2}s_0$. Then $\|L\|_s\le\frac{1}{2}$
for  $0\le s\le s_1$.

Since $K\ge 0$ in $\H_s$, we have $\|(I+K)^{-1}\|_s\le 1$, moreover,
\[
\|(I+QQ^*)^{-1}\|_s = \|(I+(I+K)^{-1}L)^{-1}(I+K)^{-1}\|_s\le 2.
\]
Hence $\|P^{-1}\|_s\le 2C$ for $0\le s\le s_1$ as stated in (a).

We now estimate $A=Q\bar P\,^{-1}$.
By Proposition \ref{PropAC3},
$\|A\|_0\le a_0:= C(1+C^2)^{1/2}<1$.
By part (a), $\|A\|_s\le a_1:=2C^2$, $0\le s\le s_1$.
By interpolation (see \cite{Ku2}) for $0\le s\le s_1$,
\[
\|A\|_s\le a_0^{1-(s/s_1)}a_1^{s/s_1}\to a_0
\quad {{\rm as}} \quad s\to 0.
\]
Let $a_0<a<1$, say $a=\frac{1+a_0}{2}$. Now by shrinking $s_1$
if necessary we obtain $\|A\|_s\le a$ for all $0\le s\le s_1$,
as desired.
$\blacksquare$

\subsection{Vector-valued Sobolev spaces}

Let $X$ be a Banach space. Denote by   $W^{k,p}(\D,X)$  the  Sobolev classes of maps $Z:\D \to X$ admitting weak partial derivatives $D^\alpha Z\in L^p(\D,X)$ up  to the order $k$ (as usual we identify functions coinciding almost everywhere).  We define weak derivatives in the usual way using the space $C^\infty_0(\D)$ of smooth scalar-valued test functions with compact support in $\D$. Then $W^{k,p}(\D,X)=L^p(\D,X)$ if $k=0$. The norm on $L^{p}(\D,X)$ is defined by
\[
\| Z \|_{L^p(\D,X)} = \left ( \int_{\D} \| Z(\zeta) \|_X^p d^2\zeta \right )^{1/p}.
\]
Here $d^2\zeta:= (i/2)d\zeta \wedge d\bar{\zeta}$ denotes the standard Lebesgue measure in $\R^2$.
The space $W^{k,p}(\D,X)$ equipped with the norm
\[
\| Z \| = \left ( \sum_{ |\alpha| \leq k} \| D^\alpha Z \|^p_{L^p(\D,X)}\right )^{1/p}
\]
is a Banach space.
We use the standard notation  $C^{\alpha}(\D,X)$ for the Lipschitz space. Denote also by $C(\bar\D,X)$ the space of vector functions continuous on $\bar\D$ equipped with the sup-norm. We will deal with the case $X = \H_s$.
We will need the following analog of Sobolev's compactness theorem.
\begin{prop}
\label{SobolevProp}
The inclusion
\begin{equation}
\label{Sobolev}
W^{1,p}(\D,\H_r) \subset C(\bar\D,\H_s),\;
s<r,\; p>2
\end{equation}
is compact.
\end{prop}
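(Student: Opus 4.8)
The plan is to reduce the statement to the classical Arzelà--Ascoli theorem for maps taking values in the metric space $\H_s$, combining two ingredients: a vector-valued Sobolev--Morrey embedding to produce equicontinuity, and the compactness of the inclusion $\H_r \subset \H_s$ for $s<r$ recorded in the Hilbert scales subsection to produce pointwise relative compactness. Since $\bar\D$ is a compact metric space, this is the natural framework.

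First I would establish the continuous embedding
\[
W^{1,p}(\D,\H_r) \subset C^{0,\alpha}(\bar\D,\H_r), \qquad \alpha = 1 - \frac{2}{p} > 0,
\]
which is available because $p>2$. This is Morrey's inequality for $\H_r$-valued maps: the standard scalar proof, based on the Riesz-potential estimate $\|Z(\zeta)-\bar{Z}_B\|_r \le C\int_B |\zeta-\eta|^{-1}\|\nabla Z(\eta)\|_r\, d^2\eta$ over balls $B$ followed by Hölder's inequality, carries over verbatim, since it uses only the fundamental theorem of calculus along segments, the triangle inequality for the Bochner integral, and scalar Hölder. Consequently, any family bounded in $W^{1,p}(\D,\H_r)$, say $\|Z\|_{W^{1,p}(\D,\H_r)}\le M$, admits continuous representatives satisfying the uniform bounds
\[
\sup_{\zeta\in\bar\D}\|Z(\zeta)\|_r \le CM, \qquad \|Z(\zeta)-Z(\zeta')\|_r \le CM\,|\zeta-\zeta'|^\alpha .
\]

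Next I would transfer these bounds to $\H_s$. For $s<r$ the inclusion $\H_r\subset\H_s$ is bounded (indeed compact), so the Hölder estimate immediately yields equicontinuity of the family as maps $\bar\D\to\H_s$. Moreover, for each fixed $\zeta$ the set $\{Z(\zeta)\}$ lies in the ball of radius $CM$ in $\H_r$, which is relatively compact in $\H_s$ precisely because the inclusion $\H_r\subset\H_s$ is compact. Thus the family is pointwise relatively compact in $\H_s$ and equicontinuous on $\bar\D$. Applying the Arzelà--Ascoli theorem in the form valid for continuous maps from a compact metric space into a metric space, I would conclude that the family is relatively compact in $C(\bar\D,\H_s)$ for the sup-norm; extracting a convergent subsequence from an arbitrary bounded sequence in $W^{1,p}(\D,\H_r)$ is exactly the asserted compactness of the inclusion (\ref{Sobolev}).

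The main technical obstacle is the vector-valued Morrey embedding: one must verify that the scalar Sobolev--Morrey estimate is genuinely coordinate-free, i.e. that the averaging and potential bounds survive the replacement of absolute values by the $\H_r$-norm under Bochner integration, and that $W^{1,p}$ maps indeed possess continuous (Hölder) representatives in the Banach-valued setting. Once this is granted, the rest is a routine combination of the uniform Hölder bound with the compactness of the Hilbert-scale inclusion through Arzelà--Ascoli; no finer structure of $\H_r$ is needed beyond the two cited facts.
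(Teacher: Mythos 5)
Your proof is correct and follows essentially the same route as the paper: the vector-valued Morrey embedding $W^{1,p}(\D,\H_r)\subset C^{\alpha}(\bar\D,\H_r)$ with $\alpha=(p-2)/p$, followed by Arzel\`a--Ascoli using the compactness of the inclusion $\H_r\subset\H_s$. You merely spell out the equicontinuity and pointwise relative-compactness details that the paper leaves implicit.
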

This result is well-known \cite{Aub} in the case of vector functions on an interval of $\R$.

\begin{proof}
We decompose \eqref{Sobolev} into
\begin{equation}
\label{Sobolev1}
W^{1,p}(\D,\H_r) \subset C^{\alpha}(\D,\H_r) \subset
C(\bar\D,\H_s).
\end{equation}
 The first inclusion in \eqref{Sobolev1}
is Morrey's embedding with $\alpha = (p-2)/p$ (see, for example, \cite{SuTu5}). The second inclusion is compact by the Arzela-Ascoli theorem, hence \eqref{Sobolev} is compact.
\end{proof} $\blacksquare$

Sobolev's compactness from Proposition \ref{SobolevProp} plays an important role in our argument. It replaces in some sense Gromov's compactness for pseudo-holomorphic curves. This is
the main reason we use Hilbert scales.
We finally note that the system (\ref{holomorphy}) still makes sense for $Z \in W^{1,p}(\D)$ with $p> 2$.

\section{Main results}

Let $\H$ be a complex Hilbert space with fixed orthonormal basis
and the standard symplectic form $\omega$. Let
$(\H_s)$ be a Hilbert scale, so that $\H_0=\H$. Denote by
\[
\B^\infty = \{ Z \in \H: \| Z \| < 1 \}
\]
the unit ball in $\H_0$. Then $r\B^\infty$ is the ball of radius $r > 0$.
We now use the notation
\[
Z = (z,w) = (z,w_1,w_2,\ldots)
\]
for the coordinates in $\H_0$.
Here $z= \langle Z,e_1 \rangle\in\C$. For a domain
$\Omega \subset \C$ we define  the cylinder
\[
\Sigma_\Omega= \{ Z \in \H_0: z \in \Omega \}
\]
in $\H_0$. Our first main result is the following

\begin{thm}
\label{squiz}
(Non-squeezing theorem.) Let $r,R > 0$ and let  $G$ be an open subset in $\Sigma_{R\D}$. Suppose that there exists a symplectomorphism
$\Phi: r\B^\infty \to G$
of class $C^1$ with respect to the $\H_0$-norm. Let $s_0>0$. Suppose that for every $s \in [0,s_0]$ the tangent maps $d\Phi(Z): \H_0 \to \H_0$ as well as their inverses $(d\Phi(Z))^{-1}$ are in fact bounded in the $\H_s$ norm (as operators  $\H_s \to \H_s$) uniformly in $Z\in r\B^\infty$. Then $r \leq R$.
\end{thm}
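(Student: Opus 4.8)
The plan is to adapt the disc-filling argument of \cite{SuTu1} to the present setting, replacing Gromov's $J$-complex spheres by $J$-complex discs whose boundaries are attached to $\partial\Sigma_{R\D}$, and to sandwich the symplectic area of such a disc between $\pi r^2$ and $\pi R^2$. The almost complex structure I would work with is $J=\Phi_*(J_{st})$, which is defined on $G$; by Lemma \ref{MatrixA} its complex representation is $A_J=Q\bar P\,^{-1}$ with $P=\Phi_Z$, $Q=\Phi_{\bar Z}$. The whole point of the hypotheses on $d\Phi$ and $(d\Phi)^{-1}$ is that Proposition \ref{PropHS} applies: there exist $s_1>0$ and $a<1$ such that $\|A_J(Z)\|_s\le a$ for $0\le s\le s_1$, uniformly in $Z\in r\B^\infty$. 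This uniform contraction estimate is exactly what makes the Cauchy--Riemann system \eqref{holomorphy} a genuine vector-valued Beltrami-type equation with symbol of norm $<1$, and it is the structural input that replaces ellipticity in the finite-dimensional theory.

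Next I would extend $J$ from $G$ to all of the closed cylinder $\overline{\Sigma_{R\D}}$, keeping it tamed by $\omega$, with $\|A_J\|_s\le a'<1$ globally and with $J=J_{st}$ near $\partial\Sigma_{R\D}$ so that the boundary condition is standard there. I then seek a $J$-complex disc $Z\in W^{1,p}(\D,\H_s)$, $p>2$, solving \eqref{holomorphy} under three constraints: $Z(\partial\D)\subset\partial\Sigma_{R\D}$, the first coordinate $z(\zeta)$ winds once around $\{|z|=R\}$, and $Z(0)=\Phi(0)$. The model solution is the flat disc $Z_0(\zeta)=(R\zeta,w_0)$; writing $Z=Z_0+u$, I would recast the boundary value problem as a fixed-point equation $u=\mathcal T\big(A_J(Z_0+u)\,\overline{(Z_0+u)}_{\bar\zeta}\big)$, where $\mathcal T$ is the Cauchy--Green type solution operator for the $\bar\partial$-problem adapted to the attachment condition on $\partial\Sigma_{R\D}$.

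\textbf{This existence step is the main obstacle.} Since $\|A_J\|_s\le a'<1$, the right-hand side is a contraction in the top-order ($W^{1,p}$) norm for the linearized Beltrami part, which handles the principal symbol; the genuinely nonlinear dependence of $A_J$ on $Z$ and the nonlinear boundary condition are then absorbed through the Schauder fixed point theorem, whose compactness requirement is supplied precisely by the compact Sobolev embedding of Proposition \ref{SobolevProp}, $W^{1,p}(\D,\H_r)\hookrightarrow C(\bar\D,\H_s)$ for $s<r$. Fixing the interior value $Z(0)=\Phi(0)$ is achieved by tuning the free parameter $w_0$ together with the freedom in the homogeneous solutions, as in the scalar theory. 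The role of the Hilbert scale is essential here: it is what converts the bounded-operator hypotheses into the compactness needed to close the fixed-point argument.

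Granting the disc $Z$, the two area bounds conclude the proof. For the upper bound, the attachment $Z(\partial\D)\subset\partial\Sigma_{R\D}$ with degree-one winding in $z$, together with closedness of $\omega$ and Stokes, identifies $\Area(Z)$ with the area of the flat model disc $Z_0$, namely $\Area(Z)=\pi R^2$; the contribution of the $w$-coordinates drops out because the boundary condition is arranged so that the $w$-part of the boundary loop is isotropic for $\omega$ (verifying this is the geometric point behind the upper bound). For the lower bound, on $Z^{-1}(G)$ the map $\tilde Z=\Phi^{-1}\circ Z$ is $J_{st}$-holomorphic (as $J=\Phi_*J_{st}$ on $G$), takes values in $r\B^\infty=\Phi^{-1}(G)$, passes through the center $\tilde Z(0)=0$ (since $Z(0)=\Phi(0)$), and has boundary on $\partial(r\B^\infty)$; the monotonicity (Lelong) inequality for holomorphic curves then gives $\Area(\tilde Z)\ge\pi r^2$. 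Since $\Phi$ is symplectic, $\Area(Z|_{Z^{-1}(G)})=\Area(\tilde Z)$, and positivity of area for the $\omega$-tame structure $J$ gives $\Area(Z|_{Z^{-1}(G)})\le\Area(Z)$. Chaining these,
\[
\pi r^2\le\Area(\tilde Z)=\Area(Z|_{Z^{-1}(G)})\le\Area(Z)=\pi R^2,
\]
whence $r\le R$, as claimed.
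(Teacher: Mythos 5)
Your overall architecture is the paper's: direct image structure $\tilde J=\Phi_*(J_{st})$ with complex representation $\tilde A=Q\bar P\,^{-1}$ (Lemma \ref{MatrixA}), the uniform bound $\|\tilde A\|_s\le a<1$ from Proposition \ref{PropHS}, existence of a disc by contraction plus Schauder with compactness from Proposition \ref{SobolevProp}, and an area sandwich via Lelong. But the existence step --- which you yourself flag as the main obstacle --- is a genuine gap, and your chosen route is precisely the one the paper identifies as unworkable. You pose the problem in the round cylinder with the nonlinear attachment condition $|z(\zeta)|=R$ on $b\D$ and postulate a solution operator $\mathcal T$ ``adapted to the attachment condition''; no such linear operator is available, and this is exactly why the paper first replaces $\Sigma_{R\D}$ by the triangular cylinder $\Sigma_{\sqrt{\pi}R\Delta}$ (an area-preserving change in the $z$-variable alone is symplectic, so nothing is lost). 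For the triangle the attachment condition becomes a \emph{linear} Riemann--Hilbert condition with discontinuous coefficients, solvable by the weighted Cauchy--Green operators $T_1,T_2$ of Proposition \ref{OpBounValScal}; moreover the contraction step consumes the hypothesis $\|A\|_s\le a<1$ through the inequality $aC_pm_p<1$, which requires $\|S_j\|_{L^p}\to\|S_j\|_{L^2}=1$ as $p\searrow 2$ (parts (iv), (v)) --- a delicate property of these specific operators that your unspecified $\mathcal T$ cannot be assumed to enjoy. Likewise, ``tuning $w_0$ together with the freedom in homogeneous solutions'' to force $Z(0)=\Phi(0)$ is not an argument: the paper handles the interior-point condition by adjoining an extra fixed-point variable $\tau\in\bar\D$ and the retraction $\Psi$, and uses the boundary geometry of the triangle (Lemma \ref{tau-lemma}) to show $\tau$ lands in the open disc; note also that one only gets $\Phi(0)\in Z(\D)$ at some $\tau$, not at $\zeta=0$, which suffices.

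Two further gaps in your endgame. First, $J=\Phi_*(J_{st})$ and $\Phi^{-1}$ exist only on $G$, so you cannot form $\tilde Z=\Phi^{-1}\circ Z$ on all of $Z^{-1}(G)$ and claim boundary ``on $\partial(r\B^\infty)$''; the paper multiplies $\tilde A$ by a cutoff equal to $1$ on $\Phi((r-\varepsilon)\bar\B^\infty)$, works with a connected component $D$ of $Z^{-1}(\Phi((r-\varepsilon)\B^\infty))$ --- whose boundary demonstrably maps into $\Phi((r-\varepsilon)b\B^\infty)$ --- and lets $\varepsilon\to 0$ at the end. Second, Lelong's lower bound is a theorem about analytic sets in $\C^n$; to invoke it in the Hilbert ball the paper first performs the finite-dimensional reduction (uniform approximation of $\Phi^{-1}\circ Z$ on $\bar D$ by $\pi_n\circ\Phi^{-1}\circ Z$ via the Cauchy integral), and this step cannot be skipped. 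Your upper bound $\Area(Z)=\pi R^2$ via Stokes and isotropy of the $w$-boundary is correct in spirit, but the isotropy ($\Re w_j$ constant on $b\D$) is an \emph{output} of the operator $T_1$ (namely $\Re T_1f|_{b\D}=0$), not something that can be ``arranged'' independently in the round cylinder where no such operator has been constructed.
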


Obviously, the scale regularity  assumption  holds in the finite-dimensional case.  Hence, Theorem \ref{squiz}
generalizes Gromov's theorem.

In view of interpolation theorems for linear operators in Hilbert scales (see, for example, \cite{Ku2}), it suffices to assume that the tangent maps for $\Phi$ are uniformly bounded for $s=0$ and
$s = s_0$.

We also note that by Proposition \ref{PropAC3} the boundedness of the inverses $(d\Phi(Z))^{-1}$
automatically follows from the boundedness of $d\Phi(Z)$ for $s=0$,
but not for $s>0$. Instead of the assumption on the inverses,
we can assume that the tangent maps $d\Phi(Z)$ are uniformly bounded in $\H_s$ for $|s|\le s_0$, that is, also for negative $s$.

It may seem reasonable to require the boundedness of $d\Phi(Z)$
only for $Z\in\H_s$. However, by the principle of uniform
boundedness and continuity of $d\Phi$, it would imply uniform
boundedness on the whole ball, hence our hypothesis does not
restrict generality.

This is clear that Theorem \ref{squiz} can be applied to symplectomorphisms between  an
arbitrary ball  (not necessarily centered at the origin) and  a cylinder obtained from $\Sigma_{R\D}$ by an affine translation and a permutation of coordinates because such transformations are symplectic.

Theorem \ref{squiz} is a consequence of our second main result on the existence of $J$-complex discs.
Following \cite{SuTu1}, we replace a circular cylinder by a triangular one. The reason is that the construction of $J$-complex discs in a circular cylinder leads to a boundary value problem for the Cauchy-Riemann equations with non-linear boundary conditions.
For the triangular cylinder, the boundary conditions become linear although with discontinuous coefficients. The latter can be handled
by means of modified Cauchy-Green operators \cite{AM}.

Denote by $\Delta$ the triangle
$\Delta = \{ z \in \C: 0 < \Im z < 1 - |\Re z| \}$.
Note that   $\Area(\Delta) = 1$.

\begin{thm}
\label{ThDiscs}
Let $\Sigma = \Sigma_\Delta$.
Let $A(Z): \H_0 \to \H_0$, be a  family of  bounded linear  operators continuously depending on $Z \in \H_0$ and such that $A(Z) = 0$ for  $Z \in\H_0\setminus\Sigma$. Suppose that $A(Z):\H_s \to \H_s$ is bounded for every $s\in [0,s_0]$. Furthermore, suppose there is $a < 1$ so that for all
$Z \in \H_0$
\begin{eqnarray}
\label{norm2}
\| A(Z) \|_{\H_s} \leq a.
\end{eqnarray}
Then there exists $p > 2$ such that for every point $(z^0,w^0) \in \Sigma$ there is a solution $Z\in W^{1,p}(\D,\H_0)$ of (\ref{holomorphy}) such that $Z(\bar\D)\subset\bar\Sigma$, $(z^0,w^0) \in Z(\D)$, $\Area(Z) = 1$, and
\begin{eqnarray}
\label{BC0}
Z(b\D) \subset
b\Sigma.
\end{eqnarray}
\end{thm}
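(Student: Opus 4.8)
The plan is to reformulate the boundary value problem as an integral equation and solve it by a fixed point argument, exactly in the spirit of the authors' stated strategy. First I would set up the filling disc as a perturbation of an explicit linear model. Over the triangle $\Delta$, a $J_{st}$-holomorphic disc filling $\Sigma$ with the prescribed boundary condition and area one is given explicitly by a linear map $\zeta \mapsto (g(\zeta), w^0)$ in the $z$-coordinate, where $g$ conformally parametrizes $\Delta$ by $\D$ (or a suitable affine/holomorphic model sending $b\D$ to the boundary of the triangular cylinder). The key point of using $\Delta$ rather than $R\D$ is that the three sides of the triangle are straight line segments, so the condition $Z(b\D)\subset b\Sigma$ becomes a \emph{linear} boundary condition $\Re(\lambda(\zeta)\, z(\zeta)) = c(\zeta)$ with piecewise constant coefficients $\lambda, c$ that are only discontinuous at the three preimages of the vertices.

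Next I would recast (\ref{holomorphy}), namely $Z_{\bar\zeta} = A(Z)\bar Z_{\bar\zeta}$, as an integral equation using a Cauchy--Green type operator. Writing the unknown as $Z = Z_0 + T g$ where $Z_0$ is holomorphic and $T$ is the (modified) Cauchy--Green operator inverting $\partial_{\bar\zeta}$ while respecting the linear boundary condition (\ref{BC0}), the PDE with boundary condition becomes a fixed point equation $g = A(Z_0 + Tg)\,\overline{(Z_0+Tg)_{\bar\zeta}} =: \mathcal{F}(g)$ on $L^p(\D,\H_0)$. The crucial analytic inputs are: (i) the modified Cauchy--Green operator $T$ of \cite{AM} is bounded $L^p \to W^{1,p}$ for $p>2$ close to $2$ despite the discontinuities at the vertices, uniformly so in $\H_s$; and (ii) its norm combined with the contraction constant $a<1$ from (\ref{norm2}) keeps $\mathcal{F}$ contracting. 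Since $\|A(Z)\|_{\H_s}\le a<1$ uniformly, for $p$ sufficiently close to $2$ the operator norm of the relevant singular integral operator $\Pi$ (the $\partial_{\bar\zeta}$-derivative of $T$, a Beurling--Ahlfors type transform) satisfies $a\|\Pi\|_{L^p}<1$, so $\mathcal{F}$ is a contraction on a ball of $L^p(\D,\H_0)$ and the contraction mapping principle yields a unique solution in the scale $\H_0$. I would then run the same estimate in $\H_s$ for $s\in[0,s_0]$, using that $A(Z):\H_s\to\H_s$ is bounded by $a$, to gain the extra regularity $Z\in W^{1,p}(\D,\H_r)$ for some $r>0$; Proposition \ref{SobolevProp} then upgrades this to continuity up to $\bar\D$, so that $Z(\bar\D)\subset\bar\Sigma$ and $Z(b\D)\subset b\Sigma$ make literal sense.

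Having produced \emph{a} solution for the given coefficient $A$, I would address the two remaining requirements: the interpolation condition $(z^0,w^0)\in Z(\D)$ and the normalization $\Area(Z)=1$. The area is forced by the boundary condition: since $Z(b\D)\subset b\Sigma$ traces the boundary of the triangular cylinder once and $\Area(\Delta)=1$, Stokes' theorem applied to $\int_\D Z^*\omega$ together with the fact that only the $z$-component contributes to the boundary integral (the $w$-components stay in the fibre) pins down $\Area(Z)=\Area(\Delta)=1$; I would verify this by reducing $\int_\D Z^*\omega$ to a contour integral $\frac{1}{2i}\oint_{b\D} \bar z\, dz$ along the triangle's boundary. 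To hit the prescribed point $(z^0,w^0)$, I would introduce parameters into the linear model $Z_0$ and solve, for each parameter value, the integral equation as above; this produces a family of discs depending continuously on finitely many real parameters, and a degree-theoretic or continuity/Schauder argument shows the evaluation map covers a full neighbourhood, in particular the target point. The main obstacle I expect is precisely controlling the \emph{modified} Cauchy--Green operator near the three vertices of $\Delta$: the boundary coefficient $\lambda(\zeta)$ jumps there, and one must choose $p>2$ small enough that the singular integral operator remains bounded on $L^p$ with the sharp norm needed to beat $a<1$ uniformly across the whole Hilbert scale $\{\H_s\}_{s\in[0,s_0]}$; keeping all these estimates $Z$-uniform and scale-uniform simultaneously is the delicate part, and it is what makes the hypothesis (\ref{norm2}) with a \emph{single} constant $a<1$ valid for all $s$ indispensable.
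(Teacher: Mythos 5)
Your overall setup (conformal model $\Phi:\D\to\Delta$, linearization of the boundary condition on the triangle, modified Cauchy--Green operators, choice of $p>2$ close to $2$ so that $a\|S\|_{L^p}<1$, and the area computation via Stokes) matches the paper. But there is a genuine gap at the central step: the map $\mathcal{F}(g)=A(Z_0+Tg)\,\overline{(Z_0+Tg)_{\zeta}}$ is \emph{not} a contraction on a ball of $L^p(\D,\H_0)$ under the stated hypotheses. Writing
\[
\mathcal{F}(g_1)-\mathcal{F}(g_2)=A(Z_1)\,\overline{S(g_1-g_2)}+\bigl(A(Z_1)-A(Z_2)\bigr)\,\overline{Z_0'+Sg_2},
\]
the first term is indeed controlled by $a\|S\|_{L^p}<1$, but the second requires a Lipschitz estimate for $Z\mapsto A(Z)$ in operator norm; the theorem assumes only \emph{continuity} of this map, and even a Lipschitz bound would not make the total constant less than $1$ without extra smallness assumptions. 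The equation is quasilinear precisely because the coefficient $A(Z)$ moves with the unknown, so the contraction mapping principle alone cannot close the argument.

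The paper resolves this with a two-tier fixed point: for each \emph{frozen} $Z\in C(\bar\D,\H_0)$ the integral equation in $U=(u,v)$ is linear and is solved by contraction (this is where $aC_pm_p<1$ enters), yielding a uniform a priori bound $\|U\|\le M_1$; the outer map $Z\mapsto\tilde Z$ is then shown to be continuous and \emph{compact}, because its range lies in $W^{1,p}(\D,\H_s)$ for some $s>0$ and the inclusion $W^{1,p}(\D,\H_s)\subset C(\bar\D,\H_0)$ is compact by Proposition \ref{SobolevProp}; Schauder's theorem, which needs only continuity, then produces the fixed point. This is the real role of the Hilbert scale hypothesis --- it supplies the compactness needed for Schauder, not merely the extra regularity you invoke it for. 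A secondary weakness: your treatment of the condition $(z^0,w^0)\in Z(\D)$ (``introduce parameters \dots\ a degree-theoretic argument covers a neighbourhood'') is not a proof; the paper builds the evaluation parameter $\tau$ directly into the Schauder iteration via the auxiliary retraction $\Psi$, and then verifies $\tau\in\D$ and $z(\tau)=z^0$ using that the boundary lines of $T_2u$ are parallel to the edges of $\Delta$.
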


In reducing Theorem \ref{squiz} to Theorem \ref{ThDiscs}, we essentially follow Gromov's \cite{Gr} argument.
\medskip

\noindent
{\bf Proof of Theorem \ref{squiz}.}
A diffeomorphism whose $z$-component is an area-preserving map and whose $w$-components are the identity maps, preserves the form $\omega$. This observation reduces the proof to the case where $G$ is contained in the triangular cylinder
$\Sigma: = \{ (z,w): z \in \sqrt{\pi}R\Delta \}$.

Put $\tilde J = \Phi_*(J_{st})$. Put $P = \Phi_Z$ and $Q = \Phi_{\bar Z}$. By Lemma \ref{MatrixA}  the complex representation $\tilde A$
of $\tilde J$ has the form (\ref{MA}). Furthermore, by Proposition \ref{PropHS} (b), $\tilde A(Z)$ satisfies (\ref{norm2}) for all $Z \in G$ and $s \in [0,s_0]$.

Fix $\varepsilon > 0$.
Let $0\le\chi\le 1$ be a smooth cut-off function with support in $G$
and such that $\chi=1$ on $\Phi((r-\varepsilon)\bar\B^\infty)$.
Define $A=\chi\tilde A$.
Since $\chi\le 1$, the estimate (\ref{norm2}) holds for $A$.

Let $p = \Phi(0)$.
By Theorem \ref{ThDiscs} there exists a solution $Z$ of (\ref{holomorphy}) such that $p \in Z(\D)$, $Z(b\D) \subset b\Sigma$ and $\Area(Z) = \pi R^2$.
Denote by $D \subset \D$ a connected component of the pre-image $Z^{-1}(\Phi((r-\varepsilon)\B^\infty))$.
Then $X = \Phi^{-1}(Z(D))$ is a closed $J_{st}$-complex curve  in $(r-\varepsilon)\B^\infty$
with boundary contained in $(r-\varepsilon)b\B^\infty$. Furthermore, $0 \in X$ and $\Area(X) \leq \pi R^2$.

Consider the canonical projection $\pi_n: \H \to \C^n$, $\pi_n: z = (z_1, z_2 , ...) \mapsto (z_1, z_2, ... , z_n)$. Set $Z' = \Phi^{-1} \circ Z$.
Since $Z'$ is a Hilbert space valued holomorphic function in
a neighborhood of $\bar D$, by means of the Cauchy integral, the sequence $\pi_n\circ Z'$ uniformly converges to $Z'$ on $\bar D$ as $n\to\infty$.

Fix $n$ big enough such that $(\sum_{j=1}^n |Z_j'(\zeta)|^2)^{1/2} > (1-2\varepsilon)r$ for every $\zeta \in bD$. Then
$X_n:= (\pi_n \circ \Phi^{-1} \circ Z )(D) \cap (r- 2 \varepsilon)\B^n$ is a closed complex (with respect to $J_{st}$) curve through the origin in $\B^n$.
By the classical result due to Lelong (see, e.g., \cite{Ch}) we have $\Area(X_n) \geq \pi (r-2\varepsilon)^2$. Since $\Area(X_n) \le \Area(X)$ and  $\varepsilon$ is arbitrary, we obtain $r\le R$ as desired.
$\blacksquare$

\begin{cor}
\label{squizcor1}
Let $r,R > 0$ and let  $G$ be an open subset in $\Sigma_{R\D}$. Suppose that there exists a symplectomorphism
$\Phi: r\B^\infty \to G$
of class $C^1$ with respect to the $\H_0$-norm. Let $s_0>0$. Suppose that  the tangent maps $d\Phi(Z): \H_0 \to \H_0$ as well as their inverses $(d\Phi(Z))^{-1}$ are  bounded  uniformly in $Z\in r\B^\infty$. Furthermore, suppose that the antiholomorphic derivative $\Phi_{\bar Z}(Z)$ as an operator $\H_0 \to \H_{s_0}$ is bounded uniformly in $Z\in r\B^\infty$. Then $r \leq R$.
\end{cor}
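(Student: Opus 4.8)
The plan is to deduce Corollary \ref{squizcor1} from the proof of Theorem \ref{squiz}. That proof uses the scale hypotheses in only one place: via Proposition \ref{PropHS}(b) it guarantees that the complex representation $\tilde A=Q\bar P\,^{-1}$ of $\tilde J=\Phi_*(J_{st})$ (here $P=\Phi_Z$, $Q=\Phi_{\bar Z}$, and the formula is Lemma \ref{MatrixA}) satisfies a uniform estimate $\|\tilde A(Z)\|_s\le a<1$ on some interval $0\le s\le s_1$ with $s_1>0$. Everything else, namely the reduction to a triangular cylinder, the cut-off construction $A=\chi\tilde A$, the application of Theorem \ref{ThDiscs}, and the Lelong/finite-dimensional projection argument, carries over word for word with $s_1$ in place of $s_0$. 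Hence the entire task is to recover this one estimate from the weaker hypotheses of the corollary, namely $\H_0$-boundedness of $d\Phi$ and its inverse together with boundedness of $Q=\Phi_{\bar Z}$ as an operator $\H_0\to\H_{s_0}$.

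First I would record the two endpoint estimates. At $s=0$, the uniform $\H_0$-boundedness of $d\Phi(Z)$ gives $\|P\|_0,\|Q\|_0\le C$ uniformly in $Z$; since each $d\Phi(Z)$ is a linear symplectomorphism, Proposition \ref{PropAC3} applies, and from $P^*P\ge I$ (established in its proof) I get $\|\bar P\,^{-1}\|_0=\|P^{-1}\|_0\le 1$, whence by Proposition \ref{PropAC3}(c) the uniform bound $\|\tilde A(Z)\|_0\le a_0:=C(1+C^2)^{-1/2}<1$. At $s=s_0$ I would use the smoothing hypothesis $\|Q(Z)x\|_{s_0}\le C_1\|x\|_0$: writing $\|\tilde A\|_{s_0}=\|D^{s_0}Q\bar P\,^{-1}D^{-s_0}\|_0$, for $x\in\H_0$ one has
\[
\|D^{s_0}Q\bar P\,^{-1}D^{-s_0}x\|_0=\|Q\bar P\,^{-1}D^{-s_0}x\|_{s_0}\le C_1\|\bar P\,^{-1}D^{-s_0}x\|_0\le C_1\|\bar P\,^{-1}\|_0\|D^{-s_0}\|_0\|x\|_0,
\]
so that $\|\tilde A(Z)\|_{s_0}\le C_2:=C_1(\inf_n\theta_n)^{-s_0}<\infty$ uniformly, using $\|\bar P\,^{-1}\|_0\le 1$. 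The essential point is that one never controls $\bar P\,^{-1}$ above the base scale: the map $Q:\H_0\to\H_{s_0}$ absorbs the smoothing, so only $\|\bar P\,^{-1}\|_0$ enters.

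Having $\tilde A(Z)$ bounded on $\H_0$ with norm $\le a_0<1$ and on $\H_{s_0}$ with norm $\le C_2$, I would interpolate in the Hilbert scale (see \cite{Ku2}, exactly as in the proof of Proposition \ref{PropHS}) to obtain, for $0\le s\le s_0$,
\[
\|\tilde A(Z)\|_s\le a_0^{\,1-s/s_0}C_2^{\,s/s_0}\longrightarrow a_0<1\quad{\rm as}\quad s\to 0 .
\]
Setting $a:=\tfrac12(1+a_0)<1$ and shrinking $s_1\in(0,s_0]$, I get $\|\tilde A(Z)\|_s\le a$ for all $0\le s\le s_1$, uniformly in $Z$; this survives multiplication by a scalar cut-off $0\le\chi\le 1$, so $A=\chi\tilde A$ satisfies (\ref{norm2}) on $[0,s_1]$ and vanishes off $\Sigma$, and the proof of Theorem \ref{squiz} then applies unchanged. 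The main obstacle is exactly this middle interpolation step: since the corollary supplies no bound on $P^{-1}$ above the base scale, $\tilde A$ cannot be estimated on $\H_{s_0}$ by the naive product $\|Q\|_{s_0}\|\bar P\,^{-1}\|_{s_0}$; the gain comes entirely from spending the smoothing of $Q$ to bound $\tilde A$ on $\H_{s_0}$ and then interpolating down to push the norm below $1$.
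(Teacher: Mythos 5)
Your proposal is correct and follows the paper's intended route: the paper's own proof of Corollary \ref{squizcor1} is the one-line remark that the hypotheses imply \eqref{norm2} for $\tilde A$, and you supply exactly the missing verification (endpoint bounds at $s=0$ via Proposition \ref{PropAC3} and at $s=s_0$ via the smoothing of $Q$ combined with $\|\bar P\,^{-1}\|_0\le 1$, then interpolation in the Hilbert scale to push the norm below $1$ on a smaller interval $[0,s_1]$), mirroring the proof of Proposition \ref{PropHS}. Your constant $a_0=C(1+C^2)^{-1/2}$ is the correct one, consistent with Proposition \ref{PropAC3}(c).
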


The proof goes along the same lines as the proof of Theorem \ref{squiz}
because the hypotheses imply that $\tilde A$ satisfies \eqref{norm2}.

Corollary \ref{squizcor1} can be applied, for example, to symplectomorphisms of the form
$\Phi = h + K$, where $h$ is a holomorphic transformation of $\H_0$ and $K:\H_0 \to \H_{s_0}$, i.e., $K$ is a compact map $\H_0 \to \H_0$. Hence, we obtain a generalization of Kuksin's result, in which $h$ is linear. One can view Corollary \ref{squizcor1} as an infinitesimal version of theorem of Kuksin.

If  $(\H_s)$ is the Sobolev scale, the main assumption of Corollary \ref{squizcor1} means that the antiholomorphic derivative $\Phi_{\bar Z}(Z)$ is a smoothing operator. This condition is more restrictive than the assumptions of Theorem \ref{squiz} which do not require any smoothing property. In the last section, we apply Theorem \ref{squiz} to the discrete Schr\"odinger equation. The flow of the latter does not have a smoothing effect.

\section{Cauchy integral for vector functions}

We recall the modifications of the Cauchy-Green operator considered in \cite{SuTu1}. Their properties are well-known in the scalar case \cite{AIM,Mo,Ve}. The difference is that we need them for Hilbert
space-valued functions.

\subsection{ Cauchy integral and related operators}
Consider the arcs $\gamma_1 = \{ e^{i\theta} : 0 < \theta < \pi/2 \}$, $\gamma_2 = \{ e^{i\theta} : \pi/2 < \theta < \pi \}$, $\gamma_3 = \{ e^{i\theta} : \pi < \theta < 2\pi\}$ on the unit circle in $\C$.
Define the functions
\[
R(\zeta) = e^{3\pi i/4}(\zeta - 1)^{1/4} (\zeta + 1)^{1/4}(\zeta - i)^{1/2}, \qquad
X(\zeta)= R(\zeta)/\sqrt{\zeta}.
\]
Here we choose the branch of $R$ continuous in $\bar\D$
satisfying $R(0) = e^{3\pi i/4}$.
For definiteness, we also choose the branch of $\sqrt{\zeta}$ continuous in $\C$ with deleted positive real line, $\sqrt{-1}=i$. Then $\arg X$   on  arcs $\gamma_j$, $j=1,2,3$ is equal to  $3\pi/4$, $\pi/4$ and $0$ respectively.  Therefore, the function $X$ satisfies the boundary conditions
\begin{equation}
\label{BC}
\begin{cases}
\; \Im ((1+i)X(\zeta)) = 0, & \zeta \in \gamma_1,\\
\; \Im ((1-i)X(\zeta)) = 0, & \zeta \in \gamma_2,\\
\; \Im X(\zeta) = 0, &      \zeta \in \gamma_3,
\end{cases}
\end{equation}
which represent the lines through 0 parallel to the sides of
the triangle $\Delta$.

Let  $f:\D \to \C$ be a measurable function. The  Cauchy (or Cauchy-Green) operator is defined by
\[
Tf(\zeta) = \frac{1}{2\pi i} \int_\D \frac{f(t)dt \wedge d\bar{t}}{ t-\zeta}.
\]
The operator $T: L^p(\D) \to W^{1,p}(\D)$ is bounded for $p > 1$, and $(\partial/\partial\bar{\zeta}) Tf = f$ as Sobolev's derivative, i.e., $T$ solves the $\bar\partial$-problem in $\D$. Furthermore, $Tf$ is holomorphic on $\C \setminus \bar{\D}$.

Let $Q$ be a function in $\D$.  Introduce the modified Cauchy-Green  operator
\[
T_Qf(\zeta) = Q(\zeta)\left ( T(f/Q)(\zeta) + \zeta^{-1}\bar{T(f/Q)(1/\bar{\zeta})}\right ).
\]
It can be viewed as a symmetrization of the  operator $T$ with the weight $Q$.
We will need only the operators corresponding to two special weights, namely
\[
T_1f = T_Qf + 2i\Im Tf(1) \,\,\, \mbox{ with} \,\,\,Q = \zeta - 1
\]
and
\[
T_2f = T_Qf \,\,\, \mbox{ with} \,\,\,Q = R.
\]
Note that
\begin{eqnarray}
\label{OperT1}
T_1f(\zeta) = Tf(\zeta) - \bar{Tf(1/\bar{\zeta})}.
\end{eqnarray}
We also define formal derivatives
\[
S_jf(\zeta) = \frac{\partial}{\partial\zeta} T_jf(\zeta)
\]
as integrals in the sense of the Cauchy principal value. We recall the following facts \cite{SuTu1}.

\begin{prop}
\label{OpBounValScal}
The operators $T_j,S_j$ $(j=1,2)$ enjoy the following properties:
\begin{itemize}
\item[(i)] Each $S_j :L^p(\D) \to L^p(\D)$ is a bounded linear operator for $p_1 < p < p_2$. Here for $S_1$ one has $p_1 = 1$ and $p_2 = \infty$ and for $S_2$ one has $p_1 = 4/3$ and $p_2 = 8/3$. Moreover, for $f \in L^p(\D)$, one has $S_jf(\zeta) = (\partial/\partial\zeta) T_jf(\zeta)$ as Sobolev's derivative.
\item[(ii)]  Each $T_j :L^p(\D) \to W^{1,p}(\D)$ is a bounded linear operator for $p_1<p<p_2$.  Moreover, for $f \in L^p(\D)$, one has $(\partial/\partial\bar{\zeta}) T_j f = f$ on $\D$ as Sobolev's derivative.
\item[(iii)]  For every $f \in L^p(\D)$, $2 < p < p_2$, the function $T_1f$ satisfies $\Re T_1f|_{b\D} = 0$ whereas $T_2f$ satisfies the same boundary conditions (\ref{BC}) as $X$.
\item[(iv)] Each $S_j: L^2(\D) \to L^2(\D)$, $j=1,2$, is an isometry.
\item[(v)]  The function $p \mapsto \| S_j \|_{L^p}$ approaches $\| S_j \|_{L^2} = 1$ as $p\searrow 2$.
\end{itemize}
\end{prop}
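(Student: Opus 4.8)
The plan is to reduce all five items to two standard facts about the scalar Cauchy--Green operator $T$ and its derivative, the \emph{Beurling transform}
\[
\Pi f:=\partial_\zeta Tf(\zeta)=-\frac1\pi\,\mathrm{p.v.}\!\int_\D\frac{f(t)}{(t-\zeta)^2}\,d^2t ,
\]
namely that $\partial_{\bar\zeta}Tf=f$, that $T:L^p(\D)\to W^{1,p}(\D)$ is bounded, and that $\Pi$ is bounded on $L^p(\D)$ for $1<p<\infty$. These are the only analytic inputs; everything else is algebra of the Schwarz reflection $\zeta\mapsto 1/\bar\zeta$ built into the weights, plus a single application of Stokes' formula. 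I would treat boundedness, (i)--(ii), then the boundary conditions (iii), then the $L^2$ isometry (iv), and finally (v).

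For (ii) and (i) I would differentiate the explicit formulas. By \eqref{OperT1}, $T_1f(\zeta)=Tf(\zeta)-\overline{Tf(1/\bar\zeta)}$; since $Tf$ is holomorphic on $\C\setminus\bar\D$, the reflected term $\overline{Tf(1/\bar\zeta)}$ is \emph{holomorphic} in $\zeta$ on $\D$, so it is annihilated by $\partial_{\bar\zeta}$ and we get $\partial_{\bar\zeta}T_1f=f$, while $\partial_\zeta$ of it produces $S_1f=\Pi f+\zeta^{-2}\,\overline{\Pi f(1/\bar\zeta)}$, the last summand being a remainder that the scalar theory (Vekua) shows is bounded on $L^p(\D)$ for all $1<p<\infty$; hence $p_1=1,\ p_2=\infty$ for $S_1$. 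For $T_2f=R\,\bigl(T(f/R)+\zeta^{-1}\overline{T(f/R)(1/\bar\zeta)}\bigr)$ the same reflection remark gives $\partial_{\bar\zeta}T_2f=f$, but now $R$ has fractional zeros of orders $1/4,1/4,1/2$ at $1,-1,i$. Writing $S_2f=R\,\Pi(f/R)+(\text{lower order})$ reduces the $L^p$ bound to a \emph{weighted} estimate $\|\Pi g\|_{L^p(|R|^p)}\lesssim\|g\|_{L^p(|R|^p)}$, valid exactly when $|R|^p$ lies in the Muckenhoupt class $A_p$; the order-$1/2$ zero at $i$ is the worst and forces the range $4/3<p<8/3$. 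This weighted bound is the step I expect to be the main obstacle, since it is where the fractional exponents of $R$ interact with the singularity of the Beurling kernel; I would settle it either by the $A_p$ criterion for power weights or, following \cite{SuTu1,Ve}, by an explicit kernel analysis near the three corner points. Then (ii) follows from (i) together with $\partial_{\bar\zeta}T_j=I$ and the mapping $T:L^p\to W^{1,p}$.

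For (iii) I would restrict to $b\D$, where $1/\bar\zeta=\zeta$. Then $T_1f=Tf-\overline{Tf}=2i\,\Im Tf$ is purely imaginary, so $\Re T_1f|_{b\D}=0$. For $T_2$, writing $\zeta=e^{i\theta}$ and $g=T(f/R)$, the bracket becomes $g+e^{-i\theta}\bar g=2e^{-i\theta/2}\Re(e^{i\theta/2}g)$, which has argument $-\theta/2\pmod\pi$; multiplying by $R$ gives $\arg T_2f=\arg R-\theta/2=\arg X\pmod\pi$, so $T_2f$ lies on the same lines through $0$ as $X$ and satisfies \eqref{BC}. The factors $\zeta^{-1}$ and $\sqrt\zeta$ in the definitions of $T_Q$ and of $X$ are arranged precisely to produce this matching.

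Finally (iv) follows from (iii) by one integration by parts. Put $u=T_jf$, so that $u_{\bar\zeta}=f$ and $u_\zeta=S_jf$. The pointwise identity $|u_\zeta|^2-|u_{\bar\zeta}|^2=J_u$ (the Jacobian of $u$) together with Stokes' formula gives
\[
\int_\D\bigl(|S_jf|^2-|f|^2\bigr)\,d^2\zeta=\int_\D J_u\,d^2\zeta=\frac{1}{2i}\oint_{b\D}\bar u\,du .
\]
The boundary conditions of (iii) kill the last integral: for $T_1$, $\Re u=0$ on $b\D$ makes $\bar u\,du$ an exact form with vanishing period; for $T_2$, on each arc $u$ runs along a fixed line through the origin, so $\Im(\bar u\,du)=0$ there. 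Hence $\|S_jf\|_{L^2}=\|f\|_{L^2}$, proving (iv) and in particular $\|S_j\|_{L^2}=1$. Item (v) is then immediate: by Riesz--Thorin the map $1/p\mapsto\log\|S_j\|_{L^p}$ is convex, hence continuous on the open interval $(1/p_2,1/p_1)$ containing $1/2$, and it vanishes at $1/p=1/2$, so $\|S_j\|_{L^p}\to\|S_j\|_{L^2}=1$ as $p\searrow2$.
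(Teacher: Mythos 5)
You should first note what you are being compared against: the paper itself gives \emph{no} proof of Proposition \ref{OpBounValScal} --- it is imported wholesale from \cite{SuTu1}, where the scalar singular-integral facts rest in turn on Antoncev--Monakhov \cite{AM}, Monakhov \cite{Mo} and Vekua \cite{Ve}. Measured against that, most of your reconstruction is correct and is indeed the standard route: the reflection identity behind \eqref{OperT1} gives $\partial_{\bar\zeta}T_jf=f$ and writes $S_1$ as $\Pi$ plus the Bergman-type operator with kernel $(1-\bar t\zeta)^{-2}$, bounded on $L^p(\D)$ for all $1<p<\infty$; the boundary computation using $1/\bar\zeta=\zeta$ on $b\D$, yielding $T_1f=2i\Im Tf$ and $T_2f=2X\,\Re\bigl(\zeta^{1/2}T(f/R)\bigr)$, is exactly how (iii) is obtained; the identity $\int_\D(|u_\zeta|^2-|u_{\bar\zeta}|^2)\,d^2\zeta=\tfrac{1}{2i}\oint_{b\D}\bar u\,du$ combined with the fact that on each arc $u$ runs along a line through the origin (so $\bar u\,du=\tfrac12 d(|u|^2)$ there) is the standard proof of (iv); and (v) follows from logarithmic convexity of $p\mapsto\|S_j\|_{L^p}$ as you say.

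The genuine gap is in (i) for $S_2$, which is the only place where the interval $(4/3,8/3)$ must actually be produced, and your proposed mechanism does not produce it. First, the $A_p$ criterion applied to the main term $R\,\Pi(f/R)$, i.e.\ to the weight $w=|R|^p$, does not ``force the range $4/3<p<8/3$'': near $i$ one has $w\sim|\zeta-i|^{p/2}$, and the power-weight $A_p$ condition $-2<p/2<2(p-1)$ gives only $p>4/3$ and \emph{no upper bound at all} (duality gives the same). Second, the terms you set aside as ``lower order'' are not lower order: $S_2f$ contains $R'\cdot\bigl(T(f/R)+\zeta^{-1}\overline{T(f/R)(1/\bar\zeta)}\bigr)=(R'/R)\,T_2f$, and $R'/R$ has first-order poles at all three corners. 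In fact the upper bound comes precisely from this term at the corners $\pm1$: there $R'\sim c\,(\zeta\mp1)^{-3/4}$, which fails to lie in $L^p(\D)$ for $p\ge 8/3$, while for such $p$ the factor $T(f/R)$ is generically continuous and nonvanishing at $\pm1$; hence $S_2f\notin L^p$ and $p_2=8/3$ is sharp. For $p<8/3$ the crude pointwise bound of this term gives $|(R'/R)T_2f|\lesssim|\zeta-\zeta_0|^{-2/p}$ near each corner, which \emph{just barely fails} to be in $L^p$, so no Hölder/Sobolev bookkeeping will close the estimate: one needs the explicit weighted kernel analysis of \cite{AM,Mo} (your stated fallback). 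As written, the proposal identifies the correct decomposition but misattributes the source of the range $(4/3,8/3)$ and leaves unproved exactly the terms that determine it; items (ii)--(v) are fine once (i) is granted.
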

We need to extend Proposition \ref{OpBounValScal} to Hilbert space-valued functions.

\subsection{Operators on spaces of vector functions}

For definiteness we only consider functions $\D \to \H$, where as usual $\H$ is a separable Hilbert space. A function $u:\D \to \H$ is called {\it simple} if it takes only a finite number of values $h_j$, $j=1,...,m$ and every preimage $u^{-1}(h_j)$ is a measurable set. The function $u$ is called {\it strongly measurable} if there exists a sequence of simple functions $(u_n)$ converging to $u$ in the norm of $\H$. A vector function is called {\it weakly measurable} if for every $h \in \H$, the function $\zeta \mapsto \langle u(\zeta), h\rangle$ is measurable. By Pettis's theorem \cite{Y} for functions with values in separable spaces these two notions coincide, so we will use the term {\it measurable}.  Note that  simple functions are dense in $L^p(\D,\H)$.

Let $P: L^p(\D) \to L^p(\D)$ be a bounded linear operator. We say that {\it $P$ extends to $L^p(\D,\H)$} if there is a unique
bounded linear operator $P_\H: L^p(\D,\H) \to L^p(\D,\H)$ such that for every $u \in L^p(\D)$ and $h \in \H$ we have
$P_\H(uh) = P(u)h$. We will usually omit the index $\H$ in $P_\H$.

\begin{prop}
\label{VectIntPropI}
\begin{itemize}
\item[(i)] Every bounded linear operator $P:L^p(\D) \to L^p(\D)$ extends to $L^p(\D,\H)$, $1 \le p < \infty$, and the extension has the same norm as $P$.
\item[(ii)] For $p > 2$ the operators $T$, $T_1$ are bounded linear operators $L^p(\D,\H) \to C^\alpha(\D,\H)$ with
$\alpha = (p-2)/p$.
\item[(iii)] For $u \in L^p(\D,\H)$ for the same $p$ as in Proposition \ref{OpBounValScal}, we have
\begin{eqnarray*}
\frac{\partial Tu}{\partial \bar \zeta} = u, \,\,\frac{\partial T_ju}{\partial \bar \zeta} = u, \,\,
\frac{\partial Tu}{\partial  \zeta} = Su, \,\,\frac{\partial T_ju}{\partial  \zeta} = S_ju , \,\, j=1,2
\end{eqnarray*}
as weak derivatives.
\item[(iv)] The operators $T$, $T_1$, $T_2$ are bounded linear operators $L^p(\D,\H) \to W^{1,p}(\D,\H)$ for the same $p$ as in Proposition  \ref{OpBounValScal}.
\end{itemize}
\end{prop}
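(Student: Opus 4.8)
The plan is to establish (i) first, since it is the heart of the matter, and then to derive (ii)--(iv) by transferring the scalar statements of Proposition \ref{OpBounValScal} through the extension, using density of simple functions.

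For (i), existence and uniqueness of $P_\H$ follow from the density of simple functions in $L^p(\D,\H)$ together with a norm bound on simple functions, so the whole content is the estimate $\|P_\H u\|_{L^p(\D,\H)} \le \|P\|\,\|u\|_{L^p(\D,\H)}$. The finitely many values of a simple function lie in a finite-dimensional subspace $V \subset \H$; fixing an orthonormal basis $h_1,\dots,h_n$ of $V$ and writing $u = \sum_{m=1}^n u_m h_m$ with scalar $u_m \in L^p(\D)$, one has $\|u(\zeta)\|_\H = (\sum_m |u_m(\zeta)|^2)^{1/2}$ and $P_\H u = \sum_m (Pu_m)h_m$. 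Thus the desired inequality is exactly the Marcinkiewicz--Zygmund inequality $\|(\sum_m |Pu_m|^2)^{1/2}\|_p \le \|P\|\,\|(\sum_m |u_m|^2)^{1/2}\|_p$, which I would prove by Gaussian randomization. For independent standard complex Gaussians $g_1,\dots,g_n$ the rotation-invariance identity $\mathbb{E}|\sum_m a_m g_m|^p = \gamma_p (\sum_m |a_m|^2)^{p/2}$ holds with $\gamma_p = \mathbb{E}|g|^p$, so by Fubini
\[
\gamma_p \,\|u\|_{L^p(\D,\H)}^p = \mathbb{E}\,\Big\| \sum_m g_m u_m \Big\|_{L^p(\D)}^p .
\]
For each realization of the $g_m$ the function $\sum_m g_m u_m$ is scalar, $P(\sum_m g_m u_m) = \sum_m g_m (Pu_m)$, and applying $\|P\|$ inside the expectation and reversing the identity yields $\gamma_p\|P_\H u\|_{L^p(\D,\H)}^p \le \|P\|^p \gamma_p \|u\|_{L^p(\D,\H)}^p$. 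The reverse inequality $\|P_\H\| \ge \|P\|$ is immediate by testing on $u = vh$ with $\|h\|=1$, so $\|P_\H\| = \|P\|$.

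For (iii), the formulas hold on functions $u = vh$ with $v \in L^p(\D)$ scalar and $h \in \H$ fixed, since $T_j u = (T_j v)h$ and, $h$ being constant, the scalar identities $\partial_{\bar\zeta}T_j v = v$, $\partial_\zeta T_j v = S_j v$ of Proposition \ref{OpBounValScal} give $\partial_{\bar\zeta}(T_j)_\H u = u$ and $\partial_\zeta (T_j)_\H u = (S_j)_\H u$ (and likewise for $T$ and $S$). To reach general $u$, take simple $u_k \to u$ in $L^p(\D,\H)$; by (i) applied to $T_j$ and $S_j$ as bounded maps $L^p(\D) \to L^p(\D)$, both $(T_j)_\H u_k \to (T_j)_\H u$ and $(S_j)_\H u_k \to (S_j)_\H u$ in $L^p$, while $\partial_{\bar\zeta}(T_j)_\H u_k = u_k \to u$. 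Since the defining relation of a weak derivative (tested against scalar $\varphi \in C^\infty_0(\D)$ through $\H$-valued Bochner integrals) is linear and closed under $L^p$ convergence, the formulas pass to the limit. Then (iv) is immediate: by (iii) the function $(T_j)_\H u$ has weak derivatives $u$ and $(S_j)_\H u$, both in $L^p(\D,\H)$ with norms controlled by $\|u\|_{L^p(\D,\H)}$ through (i), so $(T_j)_\H u \in W^{1,p}(\D,\H)$ with $\|(T_j)_\H u\|_{W^{1,p}} \lesssim \|u\|_{L^p}$. Finally (ii) follows by composing (iv) with the vector-valued Morrey embedding $W^{1,p}(\D,\H) \hookrightarrow C^{(p-2)/p}(\D,\H)$ for $p>2$, the same embedding used in the proof of Proposition \ref{SobolevProp}.

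The main obstacle is (i): the naive mixed-norm manipulation fails, because for $p \ge 2$ Minkowski's integral inequality passes from $\|u\|_{L^p_\zeta L^2_m}$ to the larger $\|u\|_{L^2_m L^p_\zeta}$, i.e. in the wrong direction, so some genuinely Hilbertian input---here the Gaussian rotation identity underlying Marcinkiewicz--Zygmund---is unavoidable. Everything after (i) is bookkeeping: reduce to simple functions, invoke the scalar Proposition \ref{OpBounValScal}, and use closedness of weak derivatives under $L^p$ limits.
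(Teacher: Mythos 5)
Your proof is correct and follows essentially the same route as the paper: part (i) is reduced to simple functions and proved by realizing the $\ell^2$ norm as an $L^p$ norm over an auxiliary measure space so that Fubini applies --- the paper uses an exact isometric embedding of $\H$ into $L^p(0,1)$ via lacunary Fourier series where you use the Gaussian rotation identity (the normalizing constant $\gamma_p$ cancels, so both work), and parts (iii)--(iv) are obtained exactly as in the paper by density of simple functions and boundedness of the extended operators. The only structural difference is part (ii), which the paper proves directly by extending the scalar H\"older estimate for $T$ to simple functions with values in a finite-dimensional subspace and passing to the limit (handling $T_1$ via the identity $T_1f(\zeta)=Tf(\zeta)-\overline{Tf(1/\bar\zeta)}$), whereas you deduce it from (iv) composed with the vector-valued Morrey embedding already used in Proposition \ref{SobolevProp}; both are valid, and yours avoids re-running the scalar argument.
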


\begin{proof} (i) If $P$ is a singular integral operator, the result follows because $\H$ is a UMD space \cite{B}. For a general bounded linear operator the result follows because $\H$ is so called $p$-space \cite{He}, which means exactly the same as Proposition \ref{VectIntPropI} (i). For completeness we give a direct proof of Proposition \ref{VectIntPropI} (i) in Appendix. Since the operators $T$, $T_1$, $T_2$, $S$, $S_1$, and $S_2$ are bounded linear operators in $L^p(\D)$ for appropriate $p > 1$, they extend to $L^p(\D,\H)$.

(ii)  Let $u = \sum_{j=1}^n \chi_k h_k: \D \to \H$ be a simple function.
Here $h_k\in \H$ and $\chi_k$ is a characteristic function of a measurable set in $\D$.
Then $T\chi_k$ are defined and $Tu$ can be defined by $Tu = \sum_{k=1}^n (T\chi_k) h_k$. The range of this  extended operator $T$ is contained in the finite dimensional space $\H_n:= \Span\{ h_k: 1 \leq k \leq n \} \subset \H$. The proof of the corresponding result for scalar functions (see for instance \cite{Ve}, Theorem 1.19) goes through with no changes for the operator $T$ extended to  $\H_n$-valued functions. Since simple functions are dense in $L^p(\D,\H)$, the proposition follows for $T$. For the operator $T_1$ the desired result follows by the formula (\ref{OperT1}).

(iii) The bounded linear form $L^1(\D) \to \C$, $u \mapsto \int_{\D} u(\zeta)d^2\zeta$ has the norm equal to 1. By (i)  it extends to $L^1(\D,\H)$ and the extension has the norm 1. This definition of the integral for $\H$-valued functions is equivalent to Bochner's integral \cite{Y}. The result (iii) follows because it holds for scalar-valued functions and because all the operators in question  are bounded in $L^p(\D,\H)$. For example, we prove that $\partial Tu/\partial  \zeta = Su$. We need to show that for every (scalar-valued) test function $\phi$
\begin{eqnarray}
\label{IntOpIII}
\int_\D \left ( (Su) \phi + (Tu)\phi \right ) d^2\zeta = 0.
\end{eqnarray}
Fix $\phi$. Since the operators $S$ and $T$ are bounded in $L^p(\D,\H)$, the left-hand part of (\ref{IntOpIII}) defines a
bounded linear form (in $u$) on $L^p(\D,\H)$. Every simple function $u$ satisfies (\ref{IntOpIII}) because  the result holds for scalar-valued functions. The conclusion now follows by density of simple functions in $L^p(\D,\H)$.

(iv) is immediate by (i) and (iii).
\end{proof}$\blacksquare$
\medskip

Although the result of (ii) for $T_2$ still holds, the argument does not go through directly. We do not need this result here.
\medskip

We need a version of Proposition \ref{VectIntPropI} (i) for two operators acting on different components of a vector-valued function.
\begin{prop}
\label{VectIntPropI-2}
Let $X_1$ and $X_2$ be Hilbert spaces and let $X=X_1\oplus X_2$
be their Hilbert direct sum. Let $P_j:L^p(\D)\to L^p(\D)$ be bounded
linear operators, $p\ge 2$, $j=1,2$.
Let $P=(P_1,P_2):L^p(\D,X)\to L^p(\D,X)$, here $P_j$ stands for its extension to $L^p(\D,X_j)$.
Then $\|P\|_{L^p(\D,X)}\le C_p m_p$, here
$C_p=2^{\frac{1}{2}-\frac{1}{p}}$, and
$m_p=\max_{j=1,2} \|P_j\|_{L^p(\D)}$.
\end{prop}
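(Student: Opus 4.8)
The plan is to test $P$ on an arbitrary $u\in L^p(\D,X)$, written in components as $u=(u_1,u_2)$ with $u_j\in L^p(\D,X_j)$, so that $Pu=(P_1u_1,P_2u_2)$ and $\|u(\zeta)\|_X^2=\|u_1(\zeta)\|_{X_1}^2+\|u_2(\zeta)\|_{X_2}^2$ pointwise in $\zeta$. The entire estimate will follow from two elementary comparisons between the $\ell^2$ and $\ell^p$ norms on $\R^2$, valid for $p\ge 2$, applied pointwise to the pair of scalars $(\|u_1(\zeta)\|_{X_1},\|u_2(\zeta)\|_{X_2})$ and to $(\|P_1u_1(\zeta)\|_{X_1},\|P_2u_2(\zeta)\|_{X_2})$, together with the norm-preserving extension from Proposition \ref{VectIntPropI} (i).

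First I would record the scalar inequalities: for $v=(v_1,v_2)\in\R^2$ and $p\ge 2$ one has $\|v\|_p\le\|v\|_2$ by monotonicity of $\ell^p$ norms, and, by H\"older, $\|v\|_2\le 2^{1/2-1/p}\|v\|_p=C_p\|v\|_p$. Applying the first inequality to $(\|u_1(\zeta)\|_{X_1},\|u_2(\zeta)\|_{X_2})$, raising to the power $p$, and integrating over $\D$ gives
\[
\|u_1\|_{L^p(\D,X_1)}^p+\|u_2\|_{L^p(\D,X_2)}^p\le\|u\|_{L^p(\D,X)}^p.
\]

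Next I would apply the second inequality pointwise to $(\|P_1u_1(\zeta)\|_{X_1},\|P_2u_2(\zeta)\|_{X_2})$, raise to the power $p$, and integrate:
\[
\|Pu\|_{L^p(\D,X)}^p=\int_\D\bigl(\|P_1u_1\|_{X_1}^2+\|P_2u_2\|_{X_2}^2\bigr)^{p/2}\,d^2\zeta\le C_p^p\bigl(\|P_1u_1\|_{L^p(\D,X_1)}^p+\|P_2u_2\|_{L^p(\D,X_2)}^p\bigr).
\]
By Proposition \ref{VectIntPropI} (i) the extension of $P_j$ to $L^p(\D,X_j)$ has the same norm as $P_j$ on $L^p(\D)$, so $\|P_ju_j\|_{L^p(\D,X_j)}\le m_p\|u_j\|_{L^p(\D,X_j)}$. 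Inserting this and then using the displayed inequality from the previous paragraph yields $\|Pu\|_{L^p(\D,X)}^p\le C_p^p m_p^p\|u\|_{L^p(\D,X)}^p$, which is the assertion after taking $p$-th roots.

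The argument is entirely elementary once the correct pointwise inequalities are isolated, so I do not expect a genuine analytic obstacle. The only point to watch is that the two $\ell^2/\ell^p$ comparisons run in opposite directions: the lossless bound $\|v\|_p\le\|v\|_2$ is used to control $u$ from below in the $p$-norm, while the lossy bound $\|v\|_2\le C_p\|v\|_p$ is used to control $Pu$ from above. This is precisely what forces the hypothesis $p\ge 2$ and produces the constant $C_p=2^{1/2-1/p}$, and I would verify carefully that each comparison is applied on the side of the estimate where it is valid, since reversing either one would break the chain.
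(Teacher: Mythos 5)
Your proof is correct and follows essentially the same route as the paper's: the paper packages your two pointwise $\ell^2$/$\ell^p$ comparisons on $\R^2$ into the equivalent norm $\|x\|_p=(\|x_1\|^p+\|x_2\|^p)^{1/p}$ on $X$ (denoted $X^{(p)}$) and then runs the identical chain of inequalities, including the same use of Proposition \ref{VectIntPropI} (i) and the same constant $C_p=2^{\frac{1}{2}-\frac{1}{p}}$. The only difference is presentational.
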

\begin{proof}
For $x=(x_1,x_2)\in X$ we introduce the $p$-norm
\[
\|x\|_p=(\|x_1\|^p+\|x_2\|^p)^{1/p}.
\]
This norm is equivalent to the Hilbert space norm
$\|x\|=\|x\|_2$ on $X$. Furthermore, for $p\ge 2$,
\[
\|x\|_p \le \|x\|_2 \le C_p \|x\|_p.
\]
Here $C_p=\sup\{\|x\|_p^{-1}: x\in X, \|x\|_2=1\}$.
Then $C_p=2^{\frac{1}{2}-\frac{1}{p}}$ because the maximum
is attained when $\|x_1\|=\|x_2\|$.

The space $X$ equipped with the $p$-norm is a Banach space,
we denote it by $X^{(p)}$. For $p\ge 2$ we immediately obtain
\[
\|f\|_{L^p(\D,X^{(p)})}\le
\|f\|_{L^p(\D,X)}\le
C_p \|f\|_{L^p(\D,X^{(p)})}.
\]
For $f=(f_1,f_2)\in L^p(\D,X)$ we have
\begin{align*}
\|Pf\|^p_{L^p(\D,X)}
&\le C_p^p \|Pf\|^p_{L^p(\D,X^{(p)})}
=C_p^p \int_\D (\|P_1 f_1(\zeta)\|^p_{X_1}
+\|P_2 f_2(\zeta)\|^p_{X_2}) d^2\zeta
\\
&\le C_p^p m_p^p \int_\D (\|f_1(\zeta)\|^p_{X_1}
+\|f_2(\zeta)\|^p_{X_2}) d^2\zeta
=C_p^p m_p^p \|f\|^p_{L^p(\D,X^{(p)})}
\le C_p^p m_p^p \|f\|^p_{L^p(\D,X)},
\end{align*}
hence the conclusion.
\end{proof}
$\blacksquare$

\section{Proof of Theorem \ref{ThDiscs}}

Let $\Phi:\D \to \Delta$ be a biholomorphism satisfying
$\Phi(\pm 1) = \pm 1$ and $\Phi(i) = i$. Note that $\Phi \in W^{1,p}(\D)$ for $p\ge 2$ close enough to $2$ by the Christoffel-Schwarz formula. We look for a solution $Z = (z,w):\D \to \H_0$  of (\ref{holomorphy}) of class $W^{1,p}(\D,\H_0)$, $p > 2$, in the form
\begin{equation}
\label{MR}
\begin{cases}
\; z = T_2u + \Phi,\\
\; w = T_1v - T_1v(\tau) + w^0.
\end{cases}
\end{equation}
for some $\tau \in \D$; hence, $w(\tau) = w^0$.

This form ensures that $z$ satisfies the desired boundary conditions, namely, takes $b\D$ to $b\Delta$. We do not have specific boundary
conditions on the $w$-components; in \eqref{MR} each component $w_j$
takes $b\D$ to a line $\Re w_j=\text{const}$. This way the $w$-components will not contribute to the area of the disc.
We could use the operator $T_2$ in both lines of \eqref{MR},
however, the choice we make here seems more natural.

The Cauchy-Riemann equation (\ref{holomorphy}) for $Z$
of the form \eqref{MR} turns into the integral equation
\begin{eqnarray}
\label{mainsystem}
\left(
\begin{array}{cl}
 u\\
 v
\end{array}
\right) = A(z,w)\left(
\begin{array}{cl}
 \bar{S_2u} +\bar{\Phi'}\\
 \bar{S_1v}
\end{array}
\right).
\end{eqnarray}
We will show  that there exists a solution of (\ref{MR}, \ref{mainsystem}) so that $z(\tau)=z^0$ for some $\tau\in\D$.

We first analyze the equation \eqref{mainsystem} for fixed
$Z\in C(\bar\D,\H_0)$.
For every $\zeta\in\D$,
the operator $A(Z(\zeta))$ is bounded in $\H_s$ and satisfies
$\|A(Z(\zeta))\|_{\H_s}\le a<1$, $s\in [0,s_0]$.
Then $A(Z)$ defines an operator on $L^p(\D,\H_s)$ such that
$\|A(Z)\|_{L^p(\D,\H_s)}\le a$.

Let $m_p=\max_{j=1,2}\|S_j\|_{L^p(\D)}$.
Then by Proposition \ref{VectIntPropI-2}, we have
$\|(S_2,S_1)\|_{L^p(\D,\H_s)}\le C_p m_p$.
Since $m_p\to 1$ and $C_p\to 1$ as $p\searrow 2$, we can fix
$p>2$ close to 2 such that $a C_p m_p<1$.

Then for every fixed $Z\in C(\bar\D,\H_0)$, by the contraction principle there exists a unique solution $U=(u,v)\in L^p(\D,\H_s)$ of the equation (\ref{mainsystem}) satisfying
\[
\|U\|_{L^p(\D,\H_s)}
\le a (C_p m_p \|U\|_{L^p(\D,\H_s)}
+ \|(\Phi',0)\|_{L^p(\D,\H_s)} ), \quad
\|U\|_{L^p(\D,\H_s)} \le M_1:=\frac{a\|\Phi'\|_{L^p(\D)}}{1-aC_p m_p}.
\]
We now obtain an {\it a priori} estimate for (\ref{MR}, \ref{mainsystem}). Indeed, by (\ref{MR}) there exists a constant $M > 0$ depending on $M_1$ and $w^0$ such that
\begin{equation}
\label{ball1}
\| Z \|_{C(\bar\D,\H_0)} \le M.
\end{equation}
We now define a continuous map $\Psi:\C \to \bar{\D}$
\[
\Psi(z) =\begin{cases}
 \Phi^{-1}(z), & z \in \bar{\Delta},\\
 \Phi^{-1}(b\Delta \cap [z^0,z]), &  z \in \C \setminus  \bar{\Delta}.
\end{cases}
\]
Here $[z^0,z]$ is the line segment from $z^0$ to $z$, and the intersection $b\Delta \cap [z^0,z]$ consists of a single point.

Let $E = B \times \bar{\D}$, here
$B= \{ Z \in C(\bar\D,\H_0): \| Z \|_{C(\bar\D,\H_0)} \le M \}$.
Introduce the map $F: E \to E$,
$F:(z,w,\tau) \mapsto (\tilde{z},\tilde{w},\tilde{\tau})$
defined by
\begin{align*}
&\tilde{z} = T_2u+\Phi,\\
&\tilde{w} = T_1v - T_1v(\tau) + w_0,\\
&\tilde{\tau} = \Psi(z^0 - T_2u(\tau)).
\end{align*}
Here $(u,v)$ is a solution of (\ref{mainsystem}).
By \eqref{ball1} the map $F$ is well defined.

The map $F$ is continuous.
Indeed, let $Z_0\in C(\bar\D,\H_0)$. Then $Z_0(\bar\D)\subset\H_0$
is compact. Then if $Z\in C(\bar\D,\H_0)$ is close to $Z_0$,
then $A(Z(\zeta))$ and $A(Z_0(\zeta))$ as operators on $\H_0$
are close in the operator norm uniformly in $\zeta\in\bar\D$.
Then the corresponding solutions $U$ and $U_0$ of \eqref{mainsystem}
are close in $L^p(\D,\H_0)$, and the continuity of $F$ follows.

By Proposition \ref{VectIntPropI}, $F(E)\subset W^{1,p}(\D,\H_s)$.
By Proposition \ref{SobolevProp} the inclusion
$W^{1,p}(\D,\H_s) \subset C(\bar\D,\H_0)$ is compact,
hence $F(E)$ is compact.
Now by Schauder's principle the continuous compact map $F$ on a convex set $E$ has a fixed point $(z,w,\tau)$. The fixed point satisfies (\ref{MR}), (\ref{mainsystem}) and $\tau = \Psi(z^0 - T_2u(\tau))$.

By (\ref{MR}) and (\ref{mainsystem}), the map $Z=(z,w)\in W^{1,p}(\D,\H_0)$, satisfies the Cauchy-Riemann equations (\ref{holomorphy}), and $w(\tau)=w^0$.

We state the rest of the conclusions of Theorem \ref{ThDiscs}
in the following

\begin{lemma}
\label{tau-lemma}
\begin{itemize}
\item[(i)] $\tau \in \D$ and $z(\tau) = z^0$.
\item[(ii)] The map $z$ satisfies $z(\bar\D) \subset \bar\Delta$,
$z(b\D) \subset b\Delta$, and $\deg z=1$;
here $\deg z$ denotes the degree of the map
$z|_{b\D}:b\D \to b\Delta$.
In particular, $Z$ satisfies (\ref{BC0}).
\item[(iii)] $\Area(Z)=1$.
\end{itemize}
\end{lemma}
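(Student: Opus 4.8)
The plan is to extract (iii) and (i) from the topological degree of the boundary map and to isolate the containment statements of (ii) as the one genuinely hard point. Throughout I work with the scalar first component $z=T_2u+\Phi$, where $u\in L^p(\D)$ is the (now fixed) first component of the solution $U=(u,v)$ of (\ref{mainsystem}); since this $u$ is scalar-valued, the scalar boundary statement Proposition \ref{OpBounValScal}(iii) applies verbatim to $T_2u$. On each arc $\gamma_j$ the function $T_2u$ then lies on the line $\ell_j^0$ through $0$ parallel to the $j$-th side of $\Delta$, while $\Phi(\gamma_j)$ lies on that side; adding a vector parallel to a line to a point of the line stays on the line, so $z(\gamma_j)\subset\ell_j$, the line carrying side $j$. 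At each corner the two adjacent conditions force $T_2u\in\ell_j^0\cap\ell_k^0=\{0\}$, so $z$ sends the corners $1,i,-1$ of $b\D$ to the vertices $\Phi(1)=1$, $\Phi(i)=i$, $\Phi(-1)=-1$ of $\Delta$, in this counterclockwise order.

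Next I would compute the degree topologically. For any $c\notin\ell_1\cup\ell_2\cup\ell_3$ the closed curve $z|_{b\D}$ misses $c$ and, on each arc, is confined to the convex set $\ell_j\not\ni c$; hence it is homotopic rel corners, in $\C\setminus\{c\}$, to $\partial\Delta$ traversed once counterclockwise. Thus the winding number, which equals $\deg(z,\D,c)$, is $1$ for $c\in\Delta$ and $0$ for a.e.\ $c\notin\bar\Delta$. Nonvanishing degree already gives $\Delta\subset z(\D)$, in particular $z^0\in z(\D)$. For (iii) I would invoke the area--degree formula for $W^{1,p}$, $p>2$: $\int_\D(|z_\zeta|^2-|z_{\bar\zeta}|^2)\,d^2\zeta=\int_\C\deg(z,\D,c)\,d^2c=\Area(\Delta)=1$. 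Since each $w_j$ sends $b\D$ into a line $\{\Re w_j=\mathrm{const}\}$ (because $\Re T_1(\cdot)|_{b\D}=0$), Stokes makes every $w_j$-term contribute $0$, so $\Area(Z)=1$.

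For (i) I would argue from the fixed-point identity $\tau=\Psi(z^0-T_2u(\tau))$. Put $q=z^0-T_2u(\tau)$. If $q\in\bar\Delta$ then $\Phi(\tau)=q$, whence $z(\tau)=T_2u(\tau)+\Phi(\tau)=z^0$; as $z^0\in\Delta$ avoids $\ell_1\cup\ell_2\cup\ell_3\supset z(b\D)$, necessarily $\tau\in\D$. It remains to rule out $q\notin\bar\Delta$, where $\tau=\Phi^{-1}(p_*)\in b\D$ with $p_*$ the unique point of $b\Delta\cap[z^0,q]$. If $\tau\in\gamma_j$ then $T_2u(\tau)\in\ell_j^0$, so the segment $[z^0,q]$ has direction $-T_2u(\tau)$ parallel to $\ell_j$; but a segment through $z^0\in\Delta\setminus\ell_j$ parallel to $\ell_j$ cannot meet $\ell_j$, contradicting $p_*\in\mathrm{side}\,j\subset\ell_j$. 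If $\tau$ is a corner then $T_2u(\tau)=0$ and $q=z^0\in\bar\Delta$, also excluded. Hence $q\in\bar\Delta$ always, and (i) holds.

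The main obstacle is the remaining containment in (ii): $z(b\D)\subset b\Delta$ (the closed sides, not merely their lines) and $z(\bar\D)\subset\bar\Delta$. The boundary conditions pin $z|_{\gamma_j}$ only to the full line $\ell_j$ — one checks directly that they permit $\Re((1+i)z)$ to leave $[-1,1]$ on $\gamma_1$ — and the degree computation is blind to such overshoot and to folds outside $\bar\Delta$ that cancel in degree. Scalar quasiregularity is unavailable as well, since (\ref{mainsystem}) yields only $|z_{\bar\zeta}|=|u|\le a\,|Z_\zeta|_\H$, controlled by the full gradient $|Z_\zeta|_\H$ rather than by $|z_\zeta|$. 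I therefore expect this step to require using the equation beyond its boundary data: either a maximum/argument principle adapted to the modified operator $T_2$, whose boundary values are tangent to the sides of $\Delta$, or a continuity argument deforming $A$ to $0$ (where $z=\Phi$ maps $\bar\D$ biholomorphically onto $\bar\Delta$) and propagating containment by openness. This is the step on which I would spend the most effort.
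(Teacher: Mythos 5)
Your parts (i) and (iii) are essentially correct and close to the paper's own argument: (i) is the same parallel-segment contradiction derived from the fixed-point identity $\tau=\Psi(z^0-T_2u(\tau))$ and the boundary conditions \eqref{BC} (your separate treatment of the corner case is a sensible extra detail), and (iii) rests, as in the paper, on the fact that each $w_j$ maps $b\D$ into a real line and hence contributes nothing to the symplectic area, while the $z$-component contributes $\Area(\Delta)=1$. Your route to the latter via $\int_\C\deg(z,\D,c)\,d^2c$ is in fact slightly more self-contained than the paper's, since it uses only the winding number read off from the line constraints and does not presuppose $z(b\D)=b\Delta$.

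The genuine gap is exactly where you locate it: the containment $z(\bar\D)\subset\bar\Delta$ in (ii), which you leave as an unexecuted plan. The idea you are missing is already among the hypotheses of Theorem \ref{ThDiscs}: $A(Z)=0$ for $Z\in\H_0\setminus\Sigma$ (this is precisely why the cut-off $\chi$ is inserted in the proof of Theorem \ref{squiz}). Hence on the open set $G=\{\zeta\in\D:\ z(\zeta)\notin\bar\Delta\}$ the right-hand side of \eqref{holomorphy} vanishes, so $Z$ — and in particular its scalar component $z$ — is genuinely holomorphic there. This is the substitute for the scalar quasiregularity you correctly observe is unavailable from the pointwise bound $|z_{\bar\zeta}|\le a\,\|Z_\zeta\|_{\H_0}$ alone. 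The paper then concludes $G=\emptyset$ by the maximum principle (applied, in effect, to the real-linear functionals cutting out the half-planes whose intersection is $\Delta$, whose boundary values on $\partial G$ are controlled by $b\Delta$ at interior boundary points and by the line conditions on the arcs $\gamma_j$; see \cite{SuTu1} for the details). Once $z(\bar\D)\subset\bar\Delta$ is known, $z(\gamma_j)\subset\ell_j\cap\bar\Delta$ is the $j$-th closed edge, which gives $z(b\D)\subset b\Delta$, hence \eqref{BC0}, and $\deg z=1$ follows as in your winding-number computation. Without this vanishing-of-$A$ observation, neither of the two strategies you sketch can be completed from the boundary data alone, so your proof of (ii) is incomplete as it stands.
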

\proof
We closely follow \cite{SuTu1}.
If $\tau\in b\D$, then $z^0 - T_2u(\tau)\notin\Delta$,
and the line connecting $z^0$ and $z^0 - T_2u(\tau)$
intersects a certain edge of $\Delta$. But they can not
intersect because by the boundary conditions on $T_2u$,
they are parallel. Now that $\tau\in\D$, by the definition
of $\Psi$, we have $\Phi(\tau)=z^0-T_2u(\tau)$, and $z(\tau)=z^0$.

Since $A=0$ on $\H_0\setminus\Sigma$, the function $z$ is holomorphic
at every $\zeta\in\D$ for which $z(\zeta)\notin\bar\Delta$.
Then by the maximum principle, $z(\bar\D)\subset\bar\Delta$.
By the boundary conditions on $T_2u$, the map $z$ takes the arcs
$\gamma_j$ of $b\D$ to the corresponding edges of $\Delta$,
hence $\deg z=1$.

By the structure of the standard symplectic form, all components
separately contribute to $\Area(Z)$. Since each $w_j$-component of $Z$ takes $b\D$ to a real line, it does not contribute to the area. Since $z(b\D)=b\Delta$ and $\deg z=1$, we have $\Area(Z)=\Area(\Delta)=1$ as desired. We refer to \cite{SuTu1} for more details.
$\blacksquare$
\medskip

The proof of Theorem \ref{ThDiscs} is complete.

\section{Application to the discrete Schr\"odinger equation}

Consider the following system of equations
\begin{eqnarray}
\label{Sch}
iu_n' + f(|u_n|^2)u_n + \sum_k a_{nk} u_{k}  = 0.
\end{eqnarray}
Here $u(t)=(u_n(t))_{n\in\Z}$, $u_n(t)\in \C$, $t \ge 0$.
We use the notation $u_n' = du_n/dt$.
We assume that $f: \R_+ \to \R$ and its derivative are continuous
on the positive reals, furthermore,
$\lim_{x\to 0}f(x)=\lim_{x\to 0}[xf'(x)]=0$.
For example, one can take $f(x) = x^p$ with real $p > 0$.
The hypotheses on the function $f$ are imposed in order for the flow of \eqref{Sch} to be $C^1$ smooth.
We suppose that $A = (a_{nk})$ is an infinite matrix independent of $t$. Furthermore, $A$ is a hermitian matrix, that is, $a_{nk} = \overline{a_{kn}}$. For simplicity we also assume that the entries $a_{nk}$ are uniformly bounded and there exists $m > 0$ such that $a_{nk} = 0$ if $|n - k| > m$.

The equation (\ref{Sch}) with $f(x) = x$ is called the discrete self-trapping equation \cite{ELS}. The special case with $a_{nk} = 1$ if $| n - k | = 1$ and $a_{nk} = 0$ otherwise, is the discrete nonlinear (cubic) Schr\"odinger equation:
\[
iu_n' + | u_n |^2u_n + u_{n-1} + u_{n+1}  = 0.
\]
There are other discretizations of the Schr\"odinger equation, in particular, the Ablowitz-Ladik model
\[
iu_n' + (1+|u_n|^2)(u_{n-1} + u_{n+1})  = 0.
\]
The latter does not have the form \eqref{Sch}, but it can be treated
in a similar way.
In the special case $A = 0$, the equation (\ref{Sch}) can be solved explicitly:
\[
u_n(t) = e^{i t f(| u_n(0) |^2)}u_n(0).
\]
The equation (\ref{Sch}) in this special form was suggested to us by Stephan de Bievre.

The equation (\ref{Sch}) can be written in the Hamiltonian form:
\[
u_n' = i\frac{\partial H}{\partial \overline{u_n}}.
\]
The Hamiltonian $H$ is given by
\[
H = \sum_n F(| u_n |^2) + \sum_{n,k} a_{nk} \overline{u_n}u_k,
\]
here $F' = f$ and $F(0)=0$. The equation (\ref{Sch}) preserves the $l^2(\Z)$ norm
$\|u\|_{l^2} = (\sum_n |u_n|^2)^{1/2}$.
Hence, the flow $u(0) \mapsto u(t)$ of (\ref{Sch}) is globally defined on $l^2(\Z)$ and preserves the standard symplectic form $\omega = (i/2) \sum_n du_n \wedge d\overline{u_n}$.

We claim that Theorem \ref{squiz}  applies to (\ref{Sch}), hence, the non-squeezing property holds for the flow of (\ref{Sch}). Of course, the ball in Theorem \ref{squiz} need not have a center at the origin.

Consider the standard Hilbert  scale $\H_s$, $s \in \R$, defined
in Section 2.3 using $\H_0=l^2(\Z)$ and $\theta_n = (1 + n^2)^{1/2}$.  Namely, $\H_s = \{ u=(u_n)_{n\in\Z} | \, \|u\|_s< \infty\}$, where
$\|u\|_s^2 = \sum_n |u_n|^2\theta_n^{2s}$ is the norm in $\H_s$.

Let $u(t)$ be a solution of (\ref{Sch}) such that $\|u(t)\|_0 =\|u(0)\|_0\le M$. The derivative of the flow of (\ref{Sch}) at $u(t)$ is the flow of the linear equation
\[
iv_n' + a_nv_n + b_n \overline{v_n} + \sum_k a_{nk} v_k = 0.
\]
Here
\[
a_n = f'( |u_n|^2) |u_n|^2 + f(|u_{n}|^2),\quad
b_n = f'(|u_n|^2) u_n^2.
\]
We claim that the operator $v(0) \mapsto v(t)$ is bounded in $\H_s$ for all $s \in \R$.
We have
\[
(|v_n|^2)'= 2\Re (v_n'\overline{v_n})= - 2\Im ( a_n|v_n|^2 + b_n \overline{v_n}^2 + \sum_k a_{nk} \overline{v_n} v_k).
\]
Using the assumptions on $f$, $u$, and $A$, we obtain the estimate
\[
|(|v_n|^2)'| \le C_1 (|v_n|^2 + |v_n| \sum_{|k-n| \le m} |v_k|),
\]
with $C_1$ depending on $f$, $A$, and $M$. Note that $(2/5)^{1/2} \le \theta_n/\theta_{n+1} \le (5/2)^{1/2}$ for all $n \in \Z$.  Then we have
\[
|(|v_n|^2\theta_n^{2s})'| \le C_2 (|v_n|^2\theta_n^{2s} + |v_n|
\theta_n^s \sum_{ |k-n| \le m} |v_k| \theta_k^s).
\]
Here $C_2$ depends on $C_1$ and $s$. Since
$2(|v_n|\theta_n^s)( |v_k|\theta_k^s) \le |v_n|^2\theta_n^{2s}+|v_k|^2\theta_k^{2s}$, summation by $n$ yields
\[
\sum (|v_n|^2 \theta_n^{2s})' \le C_3 \sum |v_n|^2 \theta^{2s}_n.
\]
Here $C_3 = (2m+2)C_2$. Thus,
\[
(\|v(t)\|_s^2)' \le C_3 \| v(t) \|_s^2.
\]
By the Gr\"onwall lemma
\[
\| v(t) \|_s^2 \le e^{C_3t}\| v(0) \|_s^2.
\]
Hence, the linear operator $v(0) \mapsto v(t)$ is bounded in $\H_s$, $s \in \R$ by a constant $C = e^{C_3t/2}$, depending on $t$, $s$, $\| u \|_0$ and bounds on $f$ and $A$. This completes the proof of the claim.

\section{Appendix: Proof of Proposition \ref{VectIntPropI} (i)}

We need the following
\begin{lemma}
\label{VectIntPropII}
For every  $p_0 > 1$  there is a map $\H \to L^{p_0}(0,1)$ which is an isometric embedding $\H \to L^p(0,1)$ for all $1 \le p \le p_0$.
\end{lemma}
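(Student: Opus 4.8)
The plan is to realize the embedding through Gaussian random variables, whose rotation invariance turns the $\H$-norm into an $L^p$-norm up to a scalar that is universal in the vector although not in $p$. First I would fix, on the probability space $((0,1),dt)$, a sequence $(g_j)_{j\ge 1}$ of independent real Gaussian random variables of mean $0$ and variance $1$. Such a sequence exists because $(0,1)$ with Lebesgue measure is a standard probability space: one splits the binary digits of $t$ into countably many independent uniform variables and pushes each one forward by the inverse Gaussian distribution function. The computation I would rely on is that for real $c_1,\dots,c_N$ the combination $\sum_j c_j g_j$ is again centered Gaussian with variance $\sum_j c_j^2$, whence for every $p$
\[
\left\| \sum_j c_j g_j \right\|_{L^p(0,1)} = \gamma_p \left(\sum_j c_j^2\right)^{1/2},
\qquad
\gamma_p:=\left(\int_0^1 |g_1|^p\,dt\right)^{1/p}.
\]

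With this in hand I would define $J:\H\to L^{p_0}(0,1)$ on the fixed orthonormal basis by $Je_j=g_j$ and extend it linearly. For $h=\sum_j c_j e_j\in\H$ the partial sums $\sum_{j\le N} c_j g_j$ are Cauchy in $L^{p_0}$ by the displayed identity with $p=p_0$, so $Jh$ is well defined in $L^{p_0}(0,1)$ and $\|Jh\|_{L^{p_0}(0,1)}=\gamma_{p_0}\|h\|_\H$. Since $(0,1)$ has total mass $1$ we have $\|\cdot\|_{L^p}\le\|\cdot\|_{L^{p_0}}$ for $p\le p_0$, so the same series converges in every $L^p$, $1\le p\le p_0$, and passing to the limit gives $\|Jh\|_{L^p(0,1)}=\gamma_p\|h\|_\H$ for all such $p$. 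Thus one fixed map $J$ reproduces the $\H$-norm in every $L^p$, $1\le p\le p_0$, up to the single scalar $\gamma_p$. For a complex $\H$ one uses complex Gaussians $g_j$, or simply treats $\H$ as a real Hilbert space; nothing changes.

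The one genuine subtlety, and what I would treat as the main point, is the clause ``for all $p$''. The factor $\gamma_p$ really does depend on $p$, and this dependence cannot be removed by any fixed choice of $J$: on a probability space $p\mapsto\|f\|_{L^p}$ is strictly increasing unless $|f|$ is a.e.\ constant, so a single map whose $L^p$-norms all equalled $\|h\|_\H$ would force $|Jh|$ to be a.e.\ constant for every $h$, which is incompatible with linearity once $\dim\H\ge 2$. Hence the map of the lemma is to be read via the normalization $J_p:=\gamma_p^{-1}J$, a scalar multiple of one and the same Gaussian map $J$; each $J_p$ is then an exact isometric embedding $\H\to L^p(0,1)$. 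This is precisely the form needed downstream: in the proof of Proposition~\ref{VectIntPropI}~(i) the embedding is used to identify $L^p(\D,\H)$ with a subspace of $L^p(\D\times(0,1))$, and the universal factor $\gamma_p$ cancels in the ratio defining the operator norm, so the extension $P_\H$ inherits the norm of $P$ irrespective of the value of $\gamma_p$.
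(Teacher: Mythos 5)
Your proof is correct, and it takes a genuinely different route from the paper's. The paper embeds $\H$ into $L^{p_0}(0,1)$ via a lacunary Fourier series $f(t)=\sum_n c_n e^{2\pi i A^n t}$ with $p_0=2m$ and $A>m$, invoking the identity $\|f\|_{2m}=\|f\|_{2}$ together with monotonicity and logarithmic convexity of $p\mapsto\|f\|_p$ to conclude that the \emph{same} map is isometric into $L^p$ for all $1\le p\le p_0$. You instead use independent Gaussians, for which rotation invariance gives $\|\sum_j c_j g_j\|_{L^p}=\gamma_p\|c\|_{\H}$ with a $p$-dependent constant, and you normalize separately for each $p$. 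Your observation that no single linear map can be isometric into $L^p(0,1)$ for two distinct values of $p$ once $\dim\H\ge 2$ is correct and worth taking seriously: it shows the lemma cannot hold as literally stated, and indeed the paper's key identity fails --- for an $A$-lacunary series with $A>2$ one computes $\|f\|_4^4=2\|f\|_2^4-\sum_n|c_n|^4$, so $c=(1,1,0,\dots)/\sqrt{2}$ gives $\|f\|_4=(3/2)^{1/4}>\|f\|_2$; the lacunary construction only yields $\|f\|_{2m}\le (m!)^{1/(2m)}\|f\|_2$, an isomorphic rather than isometric embedding. Your repaired version --- one Gaussian map, rescaled by $\gamma_p^{-1}$ for each fixed $p$ --- is exactly what is needed downstream, since the Appendix proof of Proposition \ref{VectIntPropI} (i) works with a single fixed $p$ and only uses the pointwise identity $\|h\|_{\H}=\|h\|_{L^p(0,1)}$ inside the Fubini computation. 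So your argument is not only more elementary and probabilistic; it also closes a genuine gap in the paper's own proof.
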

\begin{proof} Without loss of generality assume that $p_0 = 2m$ is an even integer. Choose $A > m$. Let $\{ e_n \}_{n=1}^\infty$ be an orthonormal basis of $\H$.  For every $c = \sum_{n=1}^\infty c_ne_n$ define a ``lacunary'' Fourier series
\[
f(t) = \sum_{n=1}^\infty c_n e^{2\pi i A^nt}.
\]
It turns out  (see \cite{Gra}, proof of Theorem 3.7.4  with  $r=1$)  that $\| f \|_{2m} = \| f \|_2$.
Since the function $p \mapsto \| f \|_p$ is nondecreasing and logarithmically convex, we conclude that
$\| f \|_p =  \| f \|_2 = \| c \|_\H$ for all $1 \le p \le p_0$. Hence $c \mapsto f$ is an isometry.
\end{proof} $\blacksquare$
\medskip

Identify $\H$ with a closed subspace of $L^p(0,1)$. Then the value $u(\zeta)$ of  $u \in L^p(\D,\H)$ is a function $u(\zeta) \in L^p(0,1)$. Hence $u$ defines the map $\D \times (0,1) \to \C$,
$(\zeta,t) \mapsto u(\zeta)(t)$ which we denote by the same letter $u$. Consider a simple function $u \in L^p(\D,\H)$. Then
$u(\zeta,t) = u(\zeta)(t)= \sum_{k=1}^n \chi_k(\zeta) h_k(t)$. Here  $\chi_k$ is the characteristic function of a measurable set $E_k \subset \D$, the sets $E_k$ are disjoint and $h_k(t) \in \H \subset L^p(0,1)$. Define
\begin{eqnarray}
\label{ExtDef}
(P_\H u) (\zeta,t) = \sum_{k=1}^n (P\chi_k)(\zeta) h_k(t).
\end{eqnarray}
Obviously both $u$ and $P_\H u$ are measurable as maps $\D \times (0,1) \to \C$. Then by Fubini's theorem
\begin{align*}
\| P_\H u\|^p_p & = \int_\D \int_0^1 |(P_\H u)(\zeta,t)|^p dt\, d^2\zeta =\int_0^1 \int_\D |(P_\H u)(\zeta,t)|^pd^2\zeta \,dt\\
& \le \| P \|^p_p \int_0^1 \int_\D |u(\zeta,t)|^p d^2\zeta\, dt
= \| P \|_p^p \, \| u \|_p^p.
\end{align*}
Since simple functions are dense in $L^p(\D,\H)$, the operator $P_\H$ uniquely extends to all $L^p(\D,\H)$, and $\| P_\H \|_p = \|P \|_p$.
Note that for $u \in L^p(\D,\H)$ the image $P_\H u \in L^p(\D,\H)$ because it is true for simple functions. Furthermore, if $u = \varphi h$, $\varphi \in L^p(\D)$, $h \in \H$, then $P_\H u = (P\varphi) h$. Indeed, let $\varphi_n:\D \to \C$ be a sequence of simple functions converging to $\varphi$ in $L^p(\D)$.  Then $\varphi_n h \to \varphi h$ in $L^p(\D,\H)$
and
\[
P_\H u = P_\H (\varphi h) = \lim_{n \to \infty} P_\H (\varphi_n h) = h \lim_{n \to \infty} (P \varphi_n) = (P\varphi) h.
\]
Proposition  \ref{VectIntPropI} (i)  is proved.

\end{document}